\newcommand\BibTeX{{\rmfamily B\kern-.05em \textsc{i\kern-.025em b}\kern-.08em
T\kern-.1667em\lower.7ex\hbox{E}\kern-.125emX}}
\newtheorem{thm}{Theorem}[section]
\newtheorem{cor}[thm]{Corollary}
\newtheorem{prop}[thm]{Proposition}
\theoremstyle{definition}
\numberwithin{equation}{section}
\renewcommand{\i}{\imath}
\renewcommand{\vec}{\mathbf}
\providecommand{\br}{}
\newcommand{\hcont}{h_{\gamma}} 
\newcommand{\thetacont}{\theta_{\gamma}} 
\newcommand{\thetaRz}{\theta_{\alpha}} 
\newcommand{\thetacsg}{\theta_{\beta}}  
\title{Analyzing the wave number dependency of the convergence rate of
  a multigrid preconditioned Krylov method for the Helmholtz equation with an absorbing layer}
\author{B. Reps and W. Vanroose\footnote{Email: {\tt wim.vanroose@ua.ac.be}}}
\date{October 2011}
\begin{document}

\maketitle
\begin{center}
{\it Dept. Mathematics and Computer Science, Universiteit Antwerpen, Middelheimlaan 1, 2020
  Antwerpen, Belgium}
\end{center}
\begin{abstract}
  This paper analyzes the Krylov convergence rate of a Helmholtz
  problem preconditioned with Multigrid.  The multigrid \br{method} is applied to
  the Helmholtz problem formulated on a complex contour and uses
  \br{GMRES as a smoother substitute} at each level.  A one-dimensional model
  is analyzed both in a continuous and discrete way.  It is shown that
  the Krylov convergence rate of the continuous problem is independent
  of the wave number.  The discrete problem, however, can deviate
  significantly from this bound due to a pitchfork in the spectrum.  It is further
  shown in numerical experiments that the convergence rate of the
  Krylov method approaches the continuous bound as the grid distance $h$ gets small. 
\end{abstract}


\vspace{-6pt}
\section{Introduction}
The Helmholtz equation plays a central role in seismic imaging,
electromagnetic scattering and many other applications.  For $x \in
\Omega \subset \mathbb{R}^d$ the equation reads
\begin{equation}
  Hu\left(\vec{x}\right) \equiv \left[-\triangle - k\left(\vec{x}\right)^2\right] u\left(\vec{x}\right) = f(\vec{x}) \,,
\end{equation}
where the wave number $k(\vec{x})$ depends on the coordinates $\vec{x}$ and can model, for
example, the change in refractive index of the material through which
electromagnetic waves are propagating, $f(\vec{x})$ models the source of the
waves and $-\triangle$ is the $d$-dimensional negative Laplacian.

\br{In theory many applications need to be solved on an infinite domain, yet in practice a numerical solution method must truncate the domain in some way. Therefore, on the finite computational domain the equation is solved with outgoing wave conditions on the artificially introduced boundaries}.  Over the years
many good outgoing wave boundary conditions have been proposed such as \textit{Perfectly Matched Layers} (PML) \cite{B94,CW94}. \br{This leads to a spectrum of the operator where each eigenvalue has an imaginary part to represent the damping of the outgoing waves on the exterior layers}.  In a similar
way in the physics and chemistry literature absorbing boundary
conditions based on \emph{Exterior Complex Scaling} (ECS) \cite{AC71,BC71,S79} are used for example
in break-up problems \cite{mccurdy2004solving}.

After discretization the Helmholtz problem becomes a linear system
$H_hu_h=f_h$.  Due to the \br{negative shift, with a magnitude that depends on the wave number $k$,} the matrix $A$ is indefinite. Indeed,
the wave number shifts the spectrum of $-\triangle$, which is positive
definite, to the left. The eigenvalues corresponding to the smooth
modes can get close to zero or have a negative real part. These
spectral properties lead to a large condition number and iterative
methods perform poorly. Efficient preconditioners for the negative
Laplacian, such as a multigrid method, fail when applied to the Helmholtz
problem.  A recent review of the difficulties of iterative methods for the
Helmholtz problem is given in \cite{gander2010}.

\br{An important step in the improvement of the iterative solution of the Helmholtz problem was taken by Bayliss, Goldstein and Turkel \cite{BGT83,BGT85} with the shifted Laplacian preconditioner. Instead of approximately inverting the original Helmholtz operator with e.g.\ ILU or a few multigrid cycles as a preconditioning step, the Laplacian or positively shifted Laplacian is used as a preconditioner. This positive definite operator serves as an approximation of the Helmholtz operator and can be efficiently inverted with standard iterative methods. A significant extension of this idea was made by the introduction of the \emph{complex shifted Laplacian}, a related Helmholtz problem with a complex-valued wave number, which makes a better preconditioner and can still be efficiently solved with multigrid \cite{EVO04,EVO06}.}  The complex-valued wave number prevents that eigenvalues of the preconditioner come close to zero at any level of the multigrid hierarchy. This is particularly useful in the coarse grid correction where diverging \br{numerical} resonances can appear when an eigenvalue of a coarse level approaches the origin \cite{Elman}.

In \cite{JCP-paper} it was shown that scaling the wave number with a complex value has the same effect as scaling the grid distance with a complex value. As a result the Helmholtz problem can be efficiently preconditioned by a Helmholtz operator discretized on a complex-valued grid.  This might be of interest for problems where complex-valued grid distances are already used to implement the absorbing boundary conditions.  This is the case for ECS \cite{mccurdy2004solving} or PML \cite{CW94}.

The introduction of complex wave numbers (or grids) avoids the appearance of resonances, however, it does not prevent traditional smoothers like $\omega$-Jacobi or Gauss-Seidel to be unstable for the smooth modes. \br{In this paper we are interested in developing a matrix-free method, though we mention that also ILU smoothers can be unstable for similar reasons. In \cite{polynomialsmoother} we analyze GMRES as a replacement of the traditional smoothers when multigrid is applied to a preconditioning operator based on complex-valued grids. Numerical experiments show that only a few GMRES iterations are needed at every level, as opposed to the results in \cite{Elman} where multigrid was used to invert the original Helmholtz operator which requires more GMRES iterations at some intermediate levels.}

Note that for the complex shifted Laplacian preconditioner the complex shift has a parameter. The choice of the parameter is analyzed in \cite{vanGijzen2007spectral} and \cite{osei2010preconditioning}.

We mention other promising preconditioning techniques such as Moving
Perfectly Matched Layers \cite{mpml}, a transformation that turns the
Helmholtz problem into a reaction-advection-diffusion problem
\cite{HM11}, application of separation-of-variables \cite{plessix},
algebraic multilevel methods \cite{boll}, the wave-ray method
\cite{brandt1997wave,livshits2006accuracy}, and combined complex
shifted Laplacian and deflation \cite{sheikh2009fast}.

\br{In this paper we focus the analysis on the wave number dependency of the convergence behavior of a preconditioned Krylov subspace method.} The preconditioning \br{operator} is the Helmholtz operator discretized on a complex-valued grid and it is inverted with multigrid using GMRES as a smoother substitute as suggested in \cite{Elman} and \cite{polynomialsmoother}. The paper starts with a review of a one-dimensional continuous model problem in Section~\ref{sec:modelproblem}. For this problem the eigenvalues of the preconditioned \br{operator} are derived analytically and we find that the Krylov convergence rate should be independent of the wave number in Section~\ref{sec:continuouseigenvaluesprecon}. The discrete problem, discussed in Section~\ref{sec:discrete}, however, does not have this bound. We explain the origin of this deviation and give estimates for the different regions in the convergence as a function of the wave number $k$.  In Section~\ref{sec:numerical} we illustrate the theory with numerical examples.

%
%
\section{Model problem}\label{sec:modelproblem}
In this section we formulate a one-dimensional Helmholtz model problem
that is representative for higher dimensional problems that arise in many applications. It is a Helmholtz problem with a constant wave number
$k$ on the domain $\Omega=[0,1]\in\mathbb{R}$,
\begin{equation}\label{eq:helm0}
\begin{cases}
  Hu(x)\equiv \left[-\frac{d^2}{dx^2} -k^2\right]u(x) &= f(x), \quad \forall x \in (0,1);\\
  u(0) = 0;\\
  u(1) = \mbox{outgoing wave,}
\end{cases}
\end{equation} 
with a zero Dirichlet boundary condition on the left boundary $x=0$ and an outgoing wave boundary condition on the right boundary $x=1$. The right hand side $f(x)$ represents a localized source term.

\subsection{The Helmholtz problem with ECS}
The outgoing wave boundary condition in \eqref{eq:helm0} is implemented with exterior complex scaling (ECS) \cite{mccurdy2004solving}, an equivalent formulation of the PML technique by B\'erenger \cite{B94}. Therefore the domain is extended to $\Omega\cup\Gamma=[0,1]\cup(1,R]\in\mathbb{R}$ after which a complex coordinate transformation is defined as,
\begin{equation}\label{eq:domain}
 z(x) = \left\{
  \begin{array}{ll}
    x, & \hbox{$x \in [0,1]$;} \\
    1+(x-1)e^{\imath\thetacont}, & \hbox{$x \in (1,R]$.}
  \end{array}
\right.
\end{equation}
We write $R_z=z(R)\in\mathbb{C}$ for the new complex right boundary. This results in the domain $\Omega\cup\Gamma_z=[0,1]\cup(1,R_z]\in\mathbb{C}$ that is the union of the original real domain $[0,1]$ and a complex line connecting the point $1$ to $R_z$, see Figure~\ref{fig:domain}. In this paper we use linear complex scaling by simply rotating the absorbing layer over an angle $\thetacont$ in the complex plane, but smoother transitions, with a non-constant angle, are also possible. Posing a zero Dirichlet boundary condition in $R_z$ implies an outgoing wave in the original right boundary $x=1$ \cite{CW94}.

The Helmholtz problem \eqref{eq:helm0} translates into
\begin{equation}\label{eq:helm}
\begin{cases}
  Hu(z)\equiv \left[-\frac{d^2}{dz^2} -k^2\right]u(z) &= f(z), \quad \forall z \in (0,1]\cup(1,R_z];\\
  u(0) = u(R_z) = 0,\\
\end{cases}
\end{equation} with homogeneous Dirichlet boundary conditions at $z(0)=0$ and $z(R)=R_z$. Note that the source term $f(z)$ was assumed to vanish outside $[0,1]$.

We define the ECS grid on the complex stretched domain \eqref{eq:domain},
\begin{equation}\label{eq:ecsgrid}                                                                                                      
(z_j)_{0\leq j\leq n+m} =                                                                                                               
\begin{cases}                                                                                                                           
        j h,  & (0\leq j\leq n);\\
        1+(j-n)\hcont, & (n+1\leq j\leq n+m),
\end{cases}                                                                                                                             
\end{equation}
that consists of $n$ intervals of the grid distance \br{$h=1/n$} followed by
$m$ intervals of the complex grid distance \br{$\hcont=(R-1)e^{\i\thetacont}/m$} for the complex contour as illustrated in
Figure~\ref{fig:domain}. We discretize the second derivative operator on the grid \eqref{eq:ecsgrid} with
the Shortley-Weller finite difference scheme for non-uniform grids
\begin{equation*}\label{eq:shortwell}
\frac{d^2u}{dz^2}(z_j) \approx
\frac{2}{h_{j-1}+h_j}\left(\frac{1}{h_{j-1}}u_{j-1}-\left(\frac{1}{h_{j-1}}+\frac{1}{h_j}\right)u_j    
+\frac{1}{h_j}u_{j+1}\right)                                                                        
\end{equation*}
in grid point $j$, where $h_{j-1}$ and $h_j$ are the left and right grid distance respectively, and may belong either to the $h$ category or
to the $\hcont$ category. The result is a linear system of equations
\begin{equation}\label{eq:discretehelm}
H_hu_h \equiv (-L_h-k^2I_h)u_h = f_h,
\end{equation}
with a unique solution $u_h$ that approximates the continuous solution $u$ of the Helmholtz equation \eqref{eq:helm}. The higher dimensional Laplacian $\triangle$ is then constructed with Kronecker products of this one-dimensional discrete Laplacian matrix $L_h$.
\begin{figure}[h!]
\includegraphics[width=0.9\textwidth]{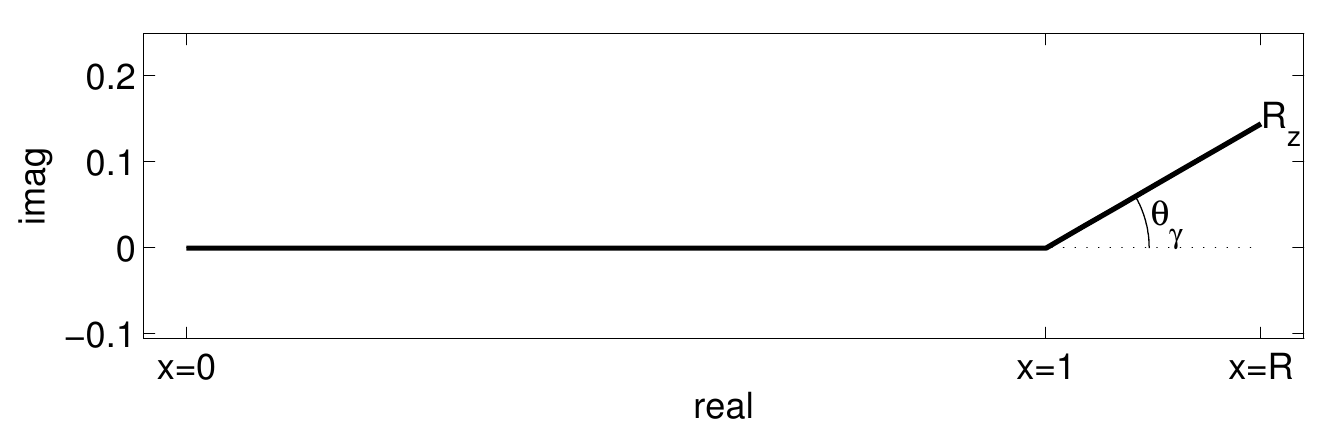}\\
\includegraphics[width=0.9\textwidth]{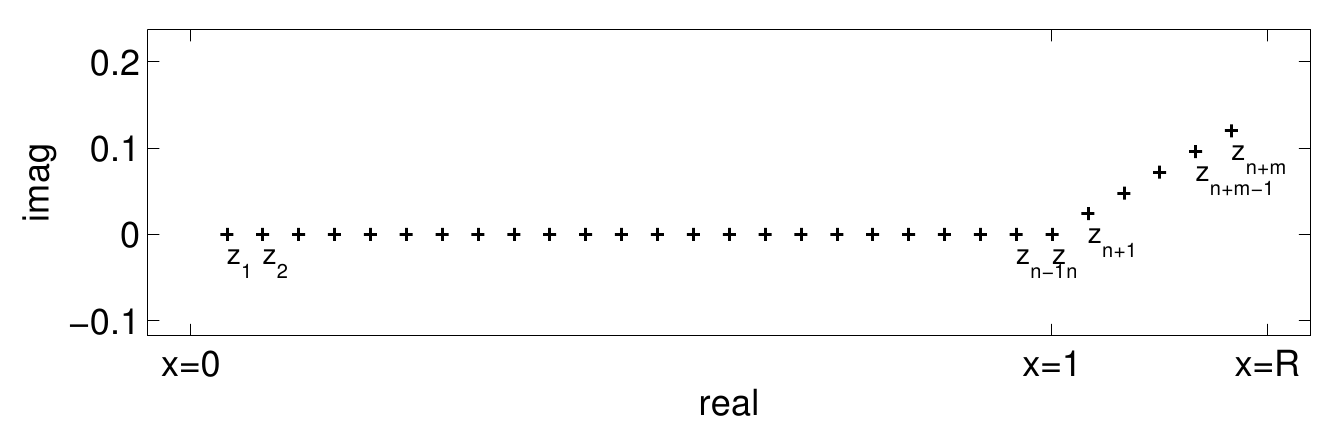}
\caption{The domain of the model problem \eqref{eq:helm} on the continuous domain (top) and the discrete grid (bottom). \label{fig:domain}}
\end{figure}

\subsection{Spectrum of the discretization matrix}\label{ssec:specdiscretehelm}
For the one-dimensional model problem the solution $u_h$ of \eqref{eq:discretehelm} is easily found with an exact inversion of the tridiagonal matrix $H_h$ in Equation~\eqref{eq:discretehelm}. As the bandwidth of the matrix grows with the dimension of the problem, so does the computational cost of direct methods. Iterative methods need to be used instead, such as multigrid and Krylov subspace solvers. The one-dimensional model has been analyzed in \cite{JCP-paper} in order to help in configuring these methods efficiently. More specifically their performance depends on the position of the eigenvalues of the matrix $H_h$ in the complex plane. Define $\gamma = \frac{h_\gamma}{h}$, then the eigenvalues of $-L_h$ are the solutions of
\begin{equation}\label{eq:eigcond}
F(t) \equiv \frac{\tan(2n p(t))}{\tan(2m q(t))}+\frac{\cos(p(t))}{\cos(q(t))} = 0,
\end{equation}
with $p(t)=\frac{1}{2}\arccos(1-\frac{t}{2}h^2)$, $q(t)=\frac{1}{2}\arccos(1-\frac{t}{2}\gamma^2
h^2)$.\\
Figure~\ref{fig:pitchfork} shows that the spectrum ($\bullet$) of $-L_h$ has a typical pitchfork shape. It is bounded in the complex plane by a triangle $\widehat{t_0t_1t_2}$ described by the points $t_0=0$, $t_1=4/h^2$ and $t_2=4/\hcont^2$. Starting in the origin $t_0=0$ we find eigenvalues along the complex line $\rho e^{-2\imath\thetaRz}$ with $\rho>0$, where $\thetaRz$ is the argument of $R_z$. \br{It was shown in \cite{JCP-paper} that these} eigenvalues approximate the smallest eigenvalues ($\times$) of the continuous Laplacian operator $-\triangle$ that will be derived in Section~\ref{sec:continuouseigenvalues}. They correspond to the smoothest modes spread over the entire ECS domain \br{and we will therefore call them the \emph{smooth} eigenvalues}. At a certain point $t_b$ the line of smooth eigenvalues splits up into two branches. One pronounced complex branch \br{consists of eigenvalues with associated eigenvectors that have their largest components at indices $n\leq j \leq n+m$. Since these eigenvectors have nearly-zero components at indices $1\leq j \leq n-1$ that correspond to the interior real region of the grid in \eqref{eq:ecsgrid}, we say that they are mainly located on the complex contour of the domain $\Gamma_z=[1,R_z]$. Whereas the other branch of eigenvalues in the spectrum lies closer to the real axis and corresponds to eigenvectors with their largest components at indices $1\leq j \leq n-1$, in other words, they are located on the real interior domain $\Omega=[0,1]$, see also Figure~\ref{fig:outereigvec}.} Together with the line of smooth eigenvalues the latter branch causes potential numerical problems as they lie close to the real axis around the points $t_0=0$ and $t_1=4/h^2$. For the entire Helmholtz operator $H_h$ in \eqref{eq:discretehelm} with a constant wave number $k$ the pitchfork shaped spectrum, and the bounding triangle, is shifted in the negative real direction over a distance $k^2$.

\begin{figure}[h!]
\begin{center}
  \includegraphics[width=\textwidth]{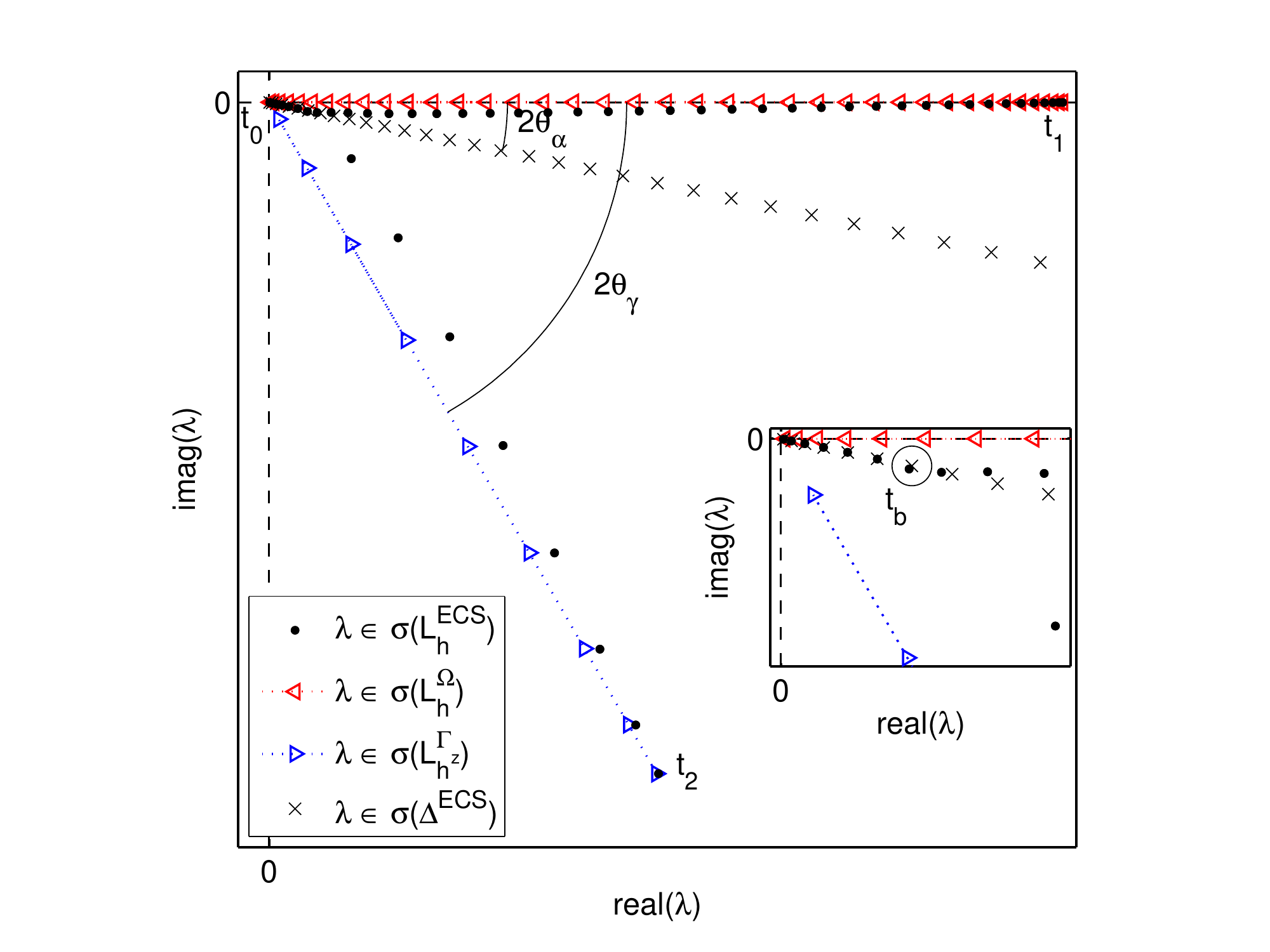}\caption{The eigenvalues of the Laplacian discretized on the ECS domain ($\bullet$) lie along a pitchfork shaped figure in the lower half of the complex plane, close to the eigenvalues of the Laplacian restricted to the interior real domain ($\triangleleft$) and the complex contour ($\triangleright$) respectively. The smallest eigenvalues approximate the eigenvalues of the continuous Laplacian ($\times$), until they split up in a point $t_b$ ({\Large$\circ$}), into two branches with limiting points $t_1 = 4/h^2$ and $t_2 = 4/(\hcont)^2$. Eigenvalues accumulate near the real axis around $0$ and $t_1$.}
\label{fig:pitchfork}
\end{center}
\end{figure}

\br{
\begin{figure}
\begin{center}
\includegraphics[width = \textwidth]{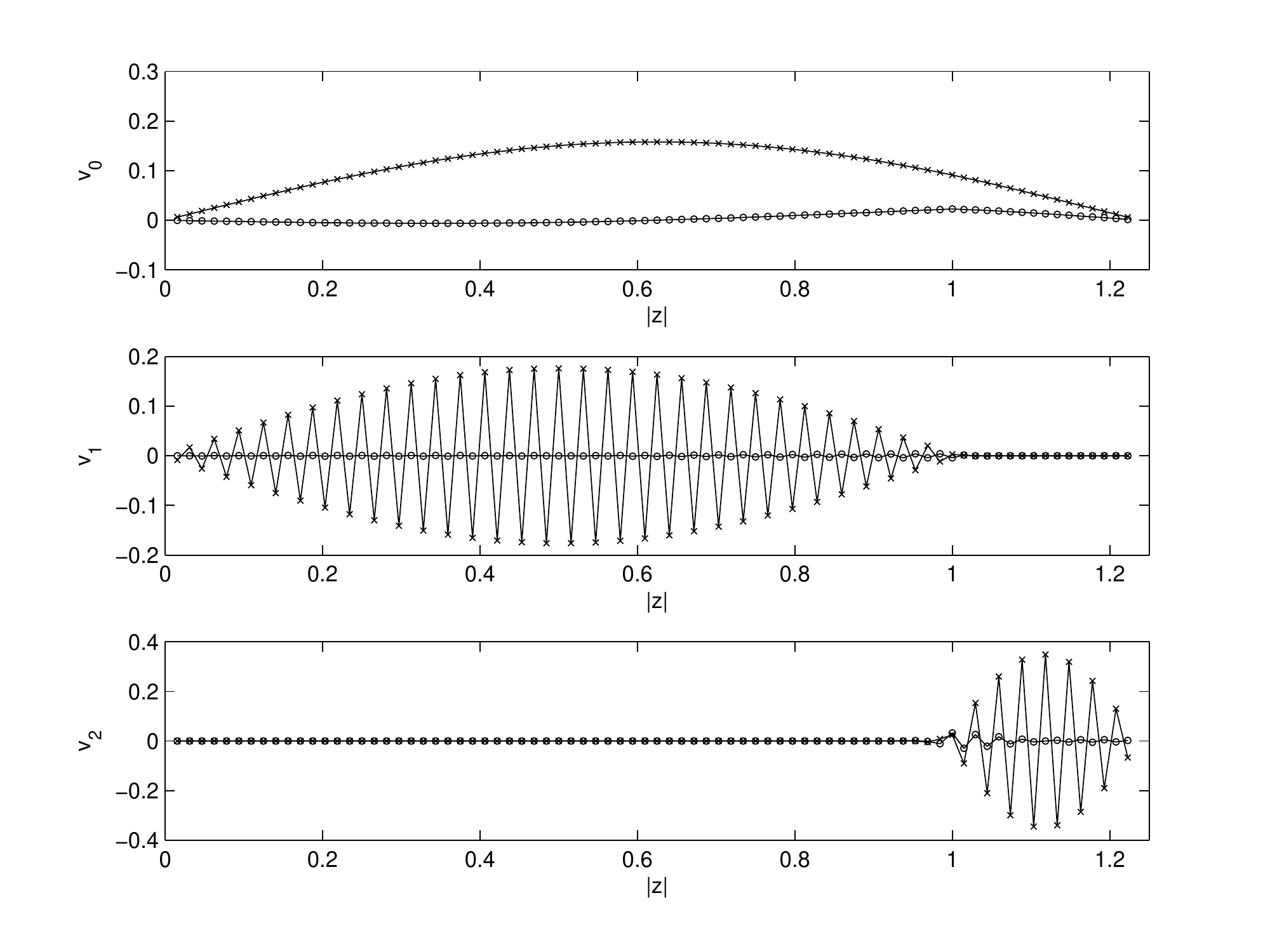}
\end{center}
\caption[]{Real part ($\times$) and imaginary part ($\circ$) of the three eigenvectors $v_0,v_1$ and $v_2$ associated to the extreme eigenvalues $\lambda_0\approx t_0$, $\lambda_1 \approx t_1$ and $\lambda_2 \approx t_2$ of the pitchfork shaped spectrum in Figure~\ref{fig:pitchfork}. Eigenvector $v_0$ (top) is the smoothest and is stretched over the entire domain, interior and exterior ECS contour. Eigenvector $v_1$ (middle) and $v_2$ (bottom) are highly oscillatory and mainly localized on the interior and exterior region respectively of the ECS domain.}
\label{fig:outereigvec}
\end{figure}
}

As the higher dimensional Helmholtz problems are constructed with Kronecker products, these results on the spectrum of the discretization matrix $H_h$ are easily extended. Every eigenvalue $\lambda$ of the $d$-dimensional Laplacian is a sum of eigenvalues $\lambda^{(l)}$ of the one-dimensional cases, $\lambda = \sum_{l=1}^{d}{\lambda^{(l)}}$. This allows us to stick the discussion to the basic academic one-dimensional model problem. Note that real applications may require more carefully engineered domains with e.g. smoother complex stretching, higher order discretization methods or an absorbing ECS layer on both sides of the domain. These generalizations might have an effect on the eigenvalues of the discretization matrix, but the main topology remains a bounded pitchfork shaped spectrum with the smoothest eigenvalues aligned, close to the continuous case.

If we assume that the spectrum of the Helmholtz discretization matrix lies inside the triangle $\widehat{t_0t_1t_2}-k^2$ in the complex plane, it is straightforward to see the main issues for iterative methods. First of all the size of the triangle grows as $h^{-2}$ which can be expected with the Laplacian operator involved. This bad conditioning destroys the efficiency of Krylov subspace methods. It would not necessarily be an issue for a multigrid method, however another difficulty is the indefiniteness of the matrix. The negative Helmholtz shift $-k^2$ drives the upper branch of the pitchfork closer towards or even past the origin. This makes the coarse grid correction in multigrid highly unstable due to a possible numerical resonance at a coarser level as was reported in \cite{JCP-paper,polynomialsmoother}. A common solution is a preconditioned Krylov subspace method where another matrix $M_h$ is defined such that,
\begin{equation*}
M_h^{-1}H_hu_h=M_h^{-1}f_h,
\end{equation*}
can be easily solved instead. The preconditioning matrix $M_h$ is chosen such that it is efficiently invertible with a fast multigrid method and such that the preconditioned matrix $M_h^{-1}H_h$ is well conditioned, that is, its eigenvalues are clustered around $1$ away from the origin. The complex shifted Laplacian $M^{CSL} = -\triangle -\beta k^2$ has been a successful choice introduced by Erlangga \cite{EVO04} for Sommerfeld radiation conditions. Simply shifting the Laplacian downwards into the complex plane fixes the coarse grid correction in multigrid. In \cite{JCP-paper} this idea was used with ECS boundary conditions, together with the introduction of the closely related complex stretched grid (CSG) operator $M^{CSG}$, that is constructed by discretizing the original Helmholtz operator $-\triangle -k^2$ on a different complex stretched domain,
\begin{equation}\label{eq:domaincsg}
 z(x) = \left\{
  \begin{array}{ll}
    xe^{\imath\thetacsg}, & \hbox{$x \in [0,1]$;} \\
    e^{\imath\thetacsg}+(x-1)e^{\imath\thetacont}, & \hbox{$x \in (1,R]$.}
  \end{array}
\right.
\end{equation}
This domain is complex scaled over $e^{\imath\thetacsg}$ in the interior region $[0,1]$; the exterior complex contour has the same scaling $e^{\imath\thetacont}$, see Figure~\ref{fig:domaincsg}. The spectrum of the discretized operator $M_h^{CSG}=L_h^{CSG}-k^2I_h$ is pitchfork shaped as the original Helmholtz operator $H_h=L_h-k^2I_h$, but with the troublesome upper branch deeper in the complex plane, see Figure~\ref{fig:pitchfork_csg}. Indeed, back scaling the entire preconditioning domain over the inner angle $\thetacsg$ with $e^{-\imath\thetacsg}$ returns a regular ECS domain with a real interior region and an ECS layer with a reduced angle $\thetacont-\thetacsg$. Discretizing the Laplace operator on this latter ECS domain gives the scaled matrix $e^{2\imath\thetacsg}L_h^{CSG}-k^2I_h$ and so $M_h^{CSG}=L_h^{CSG}-k^2I_h$ must have a
pitchfork shaped spectrum too, though somewhat more narrow and rotated away from the real axis over an angle $-2\thetacsg$.  Similar to the complex shift $\beta$ in $M^{CSL}$, the exact choice of the interior scaling angle $\thetacsg$ determines the performance of multigrid on the preconditioner versus the overall convergence rate of the preconditioned Krylov subspace method. In \cite{polynomialsmoother} \br{GMRES is suggested as a smoother substitute in} multigrid which permits small angles $\thetacsg$ for the preconditioner $M^{CSG}$. This improves the Krylov subspace convergence significantly. The goal of this paper is to have a better understanding of the spectrum of the preconditioned operator $M_h^{-1}H_h$, where eventually $M_h=M_h^{CSG}$ will be inverted with a multigrid method.

\begin{figure}[h!]
  \includegraphics[width=0.9\textwidth]{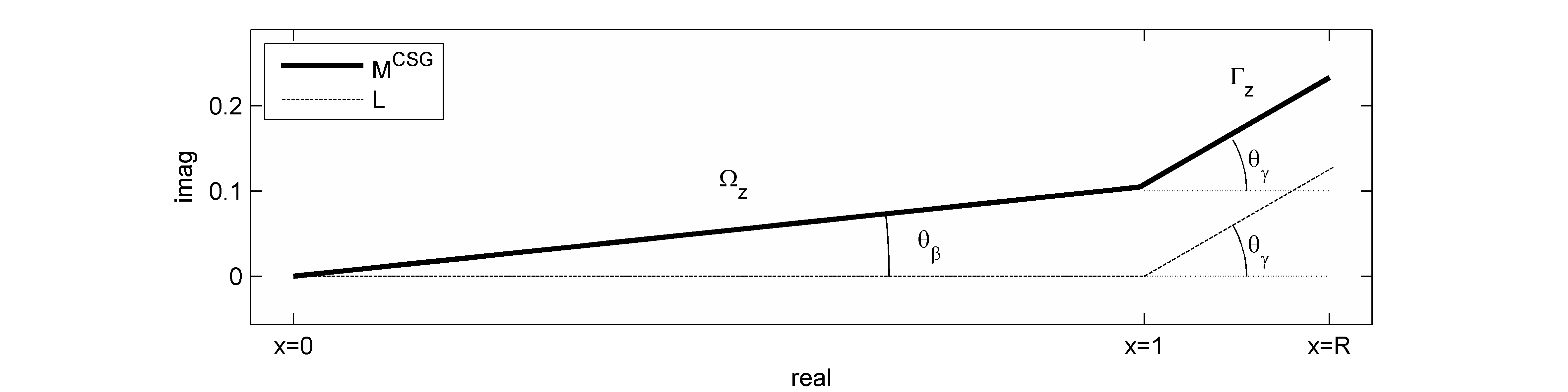}
  \caption{The domain of the CSG preconditioner (solid line)
    \eqref{eq:domaincsg} differs from the original ECS domain (dashed
    line) in the interior region where it is scaled into the complex
    plane by $e^{\imath\theta_\beta}$. The exterior complex contour
    has the same scaling $e^{\imath\theta_\gamma}$ as the original ECS
    domain.}\label{fig:domaincsg}
\end{figure}

\begin{figure}[h!]
\begin{center}
\includegraphics[width=\textwidth]{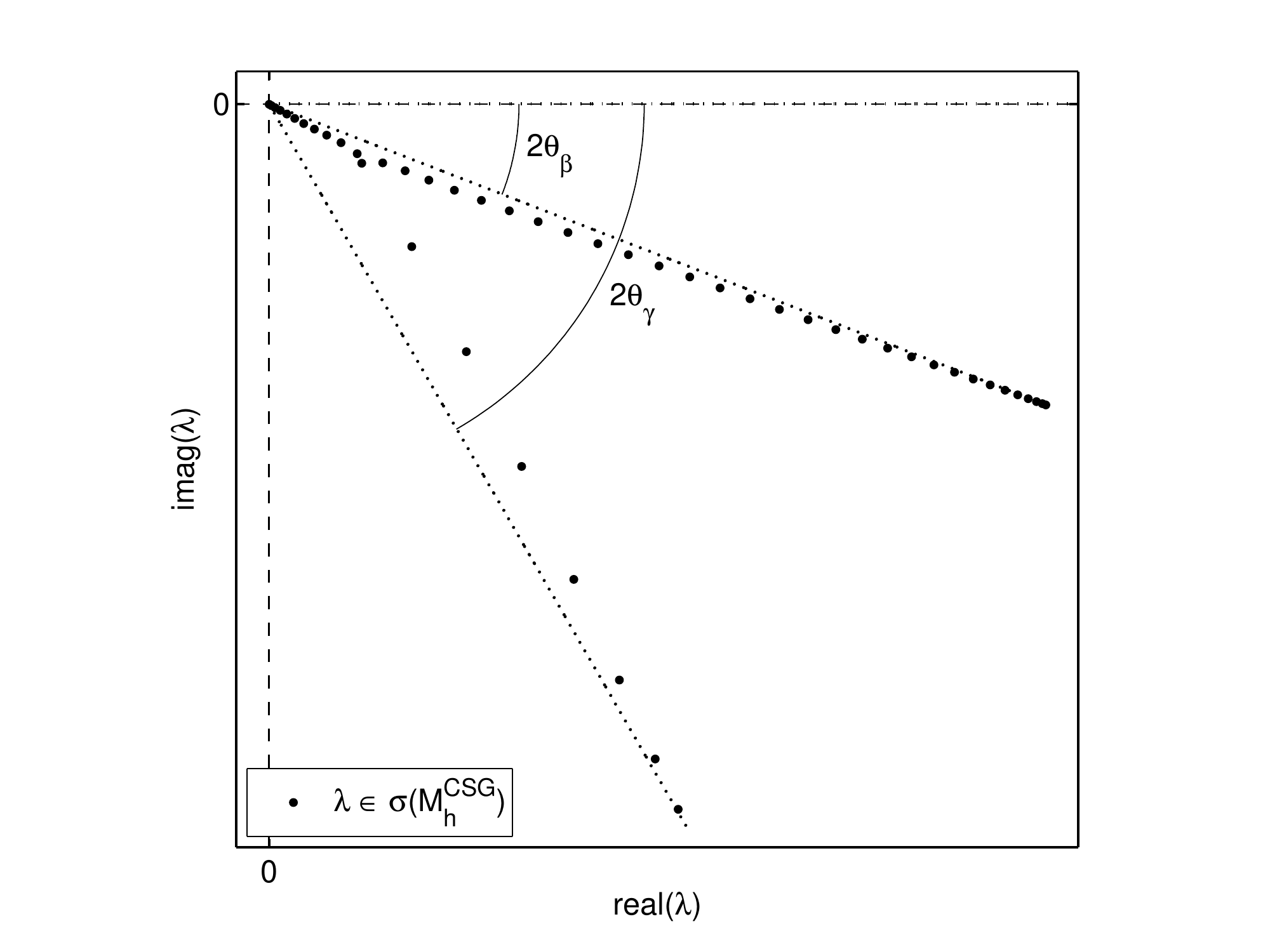}\caption{The spectrum ($\bullet$) of the Laplacian discretized on the preconditioning domain in Figure~\ref{fig:domaincsg} is pitchfork shaped too, but with the upper branch rotated away from the real axis.}
\label{fig:pitchfork_csg}
\end{center}
\end{figure}
%
%
\section{Eigenvalues of the 1D Laplacian on the complex domain}\label{sec:continuouseigenvalues}
In this section we discuss the eigenvalues of the Helmholtz problem formulated on an ECS domain as in Figure~\ref{fig:domain}. To this aim we first consider the Laplacian on a one-dimensional stretched domain
\begin{equation}\label{eq:contour}
z(x) = \int_0^x q(t) dt,
\end{equation}
in the complex plane. \br{In order to simplify the discussion in this section we purposely use the integral representation \eqref{eq:contour} for the ECS domain, as opposed to the formulation in \eqref{eq:domain}.}
We are interested in eigenmodes \br{of the Laplacian},
\begin{equation*}
  -\frac{d^2}{dz^2} u_j = \lambda_j u_j,
\end{equation*}
with Dirichlet boundary conditions $u_j(0)=0$ and $u_j(z(R))=0$.  For
the remainder of this discussion we will drop the subscript $j$ on $u$
and $\lambda$. After applying the chain rule, the equation becomes
\begin{equation*}
  \left[-\frac{1}{q(x)}\frac{d}{dx} \frac{1}{q(x)} \frac{d}{dx} -\lambda\right] u(x)=0,
\end{equation*}
with $u(0)=0$ and $u(R)=0$.  

\br{The domain is described by Equation~\eqref{eq:contour} with 
\begin{equation}\label{eq:ecsdomain}
q(x) = \begin{cases}
   1  \quad & 0 \le x \le  r,\\
   \gamma \equiv e^{\i\thetacont}  \quad&  r< x \le R,
\end{cases}
\end{equation}
where $r=1$ for the model problem in \eqref{eq:helm}.} We then have a second order ODE with constant coefficients on $[0,r]$ and on $(r,R]$
and the solutions can be written as a linear combination of two
fundamental solution. We denote with $u_1(x)$ the solution on the first
interval and $u_2(x)$ the solution on the second interval.  In the point
$r$ the solutions of the both subdomains need to be matched with the
conditions
\begin{equation*}
\begin{cases}
u_1(r) &= u_2(r), \\
\lim_{\epsilon \rightarrow 0}\frac{1}{q(r-\epsilon)}u_1^\prime(r-\epsilon)   &= \lim_{\epsilon \rightarrow 0} \frac{1}{q(r+\epsilon)}u_2^\prime(r+\epsilon),
\end{cases}
\end{equation*}
where the jump condition on the derivative expresses that $u(z(x))$ needs to be continuously differentiable along the transformed domain $z(x)$.
Solving the equation on each subdomain with boundary conditions $u_1(0)=0$ and $u_2(R)=0$ leads to 
\begin{equation*}
  \begin{cases}
    u_1(x)  = A\sin(x\sqrt{\lambda}), & 0\le x \le r; \\
    u_2(x)  = B\sin((x-R)\gamma\sqrt{\lambda}), & r \le x \le R,
\end{cases}
\end{equation*}
where $A$ and $B$ are unknown coefficients.  The solutions $u_1$ and $u_2$ have to fulfill the matching condition in $r$
\begin{equation*}
  u_1(r) = u_2(r) \quad \text{and}\quad  u_1^{\prime}(r) = \frac{1}{\gamma} u_2^\prime(r).
\end{equation*}
After setting $A=1$, which can be done without loss of generality, and eliminating $B$ with the first condition, the second equation becomes
\begin{equation*} 
\cos\left(r\sqrt{\lambda}\right) - \frac{\sin\left(r\sqrt{\lambda}\right)\cos\left(\left(r-R\right)\gamma\sqrt{\lambda}\right)}{\sin\left(\left(r-R\right)\gamma \sqrt{\lambda} \right)} = 0.
\end{equation*}
After some trigoniometry this leads to the condition 
\begin{equation*}
\sin\left(\sqrt{\lambda}((R-r)\gamma + r) \right)=0,
\end{equation*}
thus to the eigenvalues
\begin{equation*}
  \lambda_j = \left(\frac{j \pi}{\left(R-r\right) \gamma + r}\right)^2,
\end{equation*}
for $j\in\mathbb{N}_0$.

Note that $r + (R-r)\gamma=R_z=z(R)$ is the end point of the complex contour on which we have solved the \br{ODE}. In this point we have enforced Dirichlet boundary condition.  \br{Therefore these eigenvalues belong to eigenmodes that are standing waves} on the domain $[0,R_z]$.  The eigenvalues are independent of the details of the complex contour. If we would have taken a more complicated contour\br{, with e.g.\ quadratical scaling instead of linear scaling over a constant angle $\thetacont$ as in \eqref{eq:ecsdomain},} the eigenvalues would be the same.

Note that the discrete problem, discussed in
Section~\ref{sec:modelproblem}, only approximates the first few
eigenmodes along this line. Then, at a certain point along the line, the
spectrum of the discrete operator will bifurcate into two branches as
shown in Figure~\ref{fig:pitchfork}.  This branch point will be discussed in Section~\ref{ssec:branchpoint}.

The spectrum of the Helmholtz problem with constant wave
number $k$ is now 
\begin{equation}\label{eq:eigenvalues}
  \lambda_j(k) =  \left(\frac{j \pi}{R_z}\right)^2 -k^2. 
\end{equation}
with $j\in\mathbb{N}_0$. These are the eigenvalues of the Laplacian shifted over $-k^2$.

%
%

\section{Eigenvalues of the preconditioned problem on the 1D domain}\label{sec:continuouseigenvaluesprecon}
Let us now look at the eigenvalues of the preconditioning operator $M^{CSG}$.  It is defined on a domain described by  
\begin{equation}\label{eq:csgdomain}
p(x) = \begin{cases}
   \beta \equiv e^{\i \thetacsg},   & 0 \le x  \le  r;\\
   \gamma \equiv e^{\i \thetacont}, & r < x  \le R. 
\end{cases}
\end{equation}
Let us denote the end point of this complex domain as $\tilde{R}_z =
\int_0^R p(t)dt$. The eigenvalues of the preconditioning operator are described by $j^2\pi^2/\tilde{R}_z^2$, using the results of the previous section. So both for the original problem as for the preconditioned operator we have that the eigenvalues lie on a straight line in the complex plane.

Let us assume that for each $j$ the eigenvectors for the domains defined by $p$ and $q$ are the same. Then the
eigenvalues $\mu$ of the preconditioned operator $(M^{CSG})^{-1}H$ can be approximated by
  \begin{equation*}
    \mu_j = \frac{ j^2 \pi^2/R_z^2 -k^2}{j^2 \pi^2/\tilde{R}^2_z -k^2}.
  \end{equation*}
This can be rewritten as
\begin{equation}\label{eq:eigMHapprox}
\mu_j  = \frac{\tilde{R}^2_z}{R^2_z}\frac{ j^2 \pi^2/k^2 -R_z^2}{j^2 \pi^2/k^2 -\tilde{R}_z^2},
\end{equation}
\br{which is the evaluation of a linear fractional transformation $LF:\mathbb{C}\to\mathbb{C}:w\mapsto\frac{\tilde{R}^2_z}{R^2_z}\frac{w -R_z^2}{w -\tilde{R}_z^2}$ in the points $j^2 \pi^2/k^2\in\mathbb{R}$ with $j\in\mathbb{N}_0$. It is a known property in complex analysis that $LF$ maps lines to lines or circles in the complex plane. In this case} we find that the eigenvalues $\mu_j$ form a circle in the complex plane with radius
\begin{equation*}
\frac{|\tilde{R}^2_z|}{|R^2_z|}\left|\frac{R_z-\tilde{R}_z}{2 \Im(\tilde{R}_z)}\right|. 
\end{equation*}
This circle does not include the origin.  Note that the radius of the
circle is independent of the wave number $k$.

However, as the next discussion will show, the assumption that the eigenvectors for a given $j$ on the $p$ and $q$ domains are the same is invalid \br{because the eigenvalues of $(M^{CSG})^{-1}H$ are in fact different from $\mu_j$ in \eqref{eq:eigMHapprox}. Indeed, in order to} understand the spectrum we have to solve the eigenvalues of the operator
\begin{align}\label{eq:MHcontinu}
  \left(-\frac{1}{p(x)}\frac{d}{dx}\frac{1}{p(x)}\frac{d}{dx} -k^2 \right)^{-1}  \left(-\frac{1}{q(x)}\frac{d}{dx}\frac{1}{q(x)}\frac{d}{dx}-k^2 \right)u(x) = \mu u(x),
\end{align}
which is a generalized eigenvalue problem
\begin{equation*}
  \left(-\frac{1}{q(x)}\frac{d}{dx}\frac{1}{q(x)}\frac{d}{dx}-k^2 \right)u(x) = \mu  \left(-\frac{1}{p(x)}\frac{d}{dx}\frac{1}{p(x)}\frac{d}{dx} -k^2 \right)u(x)
\end{equation*}
that becomes, after reordering 
\begin{eqnarray*}
  &&\left[ -\frac{1}{q(x)}\frac{d}{dx}\frac{1}{q(x)}\frac{d}{dx} + \mu\frac{1}{p(x)}\frac{d}{dx}\frac{1}{p(x)}\frac{d}{dx} -(1-\mu)k^2 \right]u(x) = 0,\\
 &\Leftrightarrow & \left[ -\frac{1}{\tilde{q}(x)}\frac{d}{dx}\frac{1}{\tilde{q}(x)}\frac{d}{dx} -(1-\mu)k^2 \right]u(x)  = 0,
\end{eqnarray*}
with $1/\tilde{q}(x)=\sqrt{1/q(x)^2-\mu/p(x)^2}$.

\begin{prop}\label{prop:eigMHcontinu}
  For the model problems with domains defined by \br{$q(r)$ in \eqref{eq:ecsdomain} and $p(r)$ in \eqref{eq:csgdomain}},
  the eigenvalues of the operator in \eqref{eq:MHcontinu} are
\begin{equation*}
  \mu_j =  \frac{s_j^2 -1 }{s_j^2/\beta^2- 1}
\end{equation*}
with
\br{
\begin{equation*}
 s_j = \frac{1}{r}\left(\frac{j\pi}{k} - \gamma(R-r)\right) .
\end{equation*}
}
\end{prop}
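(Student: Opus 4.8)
The plan is to exploit the reduction already carried out immediately before the statement. There the generalized eigenvalue problem \eqref{eq:MHcontinu} was rewritten as the single Laplacian-type equation
$$\left[-\frac{1}{\tilde{q}(x)}\frac{d}{dx}\frac{1}{\tilde{q}(x)}\frac{d}{dx} -(1-\mu)k^2 \right]u(x)=0,$$
with $1/\tilde{q}(x)^2 = 1/q(x)^2 - \mu/p(x)^2$. This is precisely the eigenvalue problem of Section~\ref{sec:continuouseigenvalues}, with the spectral parameter $\lambda$ now played by $(1-\mu)k^2$ and with the two-piece contour $q$ replaced by the two-piece contour $\tilde{q}$. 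The one genuinely new feature, which I would flag at the outset, is that $\tilde{q}$ itself depends on the unknown $\mu$, so that the resulting eigenvalue condition is an \emph{implicit} equation for $\mu$ rather than an explicit formula; the whole proof amounts to writing down that implicit condition and solving it in closed form.

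First I would evaluate $\tilde{q}$ on the two subintervals using \eqref{eq:ecsdomain} and \eqref{eq:csgdomain}. On $[0,r]$ one has $q=1$ and $p=\beta$, giving $1/\tilde{q}^2 = 1-\mu/\beta^2$; on $(r,R]$ one has $q=p=\gamma$, giving $1/\tilde{q}^2 = (1-\mu)/\gamma^2$. Thus $\tilde{q}$ is piecewise constant and the ODE again has constant coefficients on each piece. The relevant local wave numbers $\kappa = \tilde{q}\sqrt{1-\mu}\,k$ are
$$\kappa_1 = k\sqrt{\frac{1-\mu}{1-\mu/\beta^2}} \quad\text{on } [0,r], \qquad \kappa_2 = \gamma k \quad\text{on } (r,R],$$
the second being independent of $\mu$ because of the cancellation of the factor $1-\mu$ on the exterior contour.

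Next I would reuse the matching argument of Section~\ref{sec:continuouseigenvalues} essentially verbatim: writing $u_1 = A\sin(\kappa_1 x)$ and $u_2 = B\sin(\kappa_2(x-R))$, imposing $u_1(0)=u_2(R)=0$ together with continuity and the $\tilde{q}$-weighted derivative-jump condition at $r$, the solvability determinant collapses to the single equation $\sin(\kappa_1 r + \kappa_2(R-r))=0$. This is the exact analogue of the condition $\sin(\sqrt{\lambda}((R-r)\gamma + r))=0$ found there; equivalently, one may invoke the observation of Section~\ref{sec:continuouseigenvalues} that the eigenvalues depend only on the endpoint $\int_0^R \tilde{q}\,dt$ and write $(1-\mu)k^2 = (j\pi/\int_0^R \tilde{q}\,dt)^2$ directly. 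Either way the condition reads $\kappa_1 r + \kappa_2(R-r) = j\pi$.

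Finally I would solve this for $\mu$. Substituting $\kappa_2 = \gamma k$ and isolating the $\kappa_1$ term gives
$$\sqrt{\frac{1-\mu}{1-\mu/\beta^2}} = \frac{1}{r}\left(\frac{j\pi}{k} - \gamma(R-r)\right) = s_j,$$
so $s_j^2 = (1-\mu)/(1-\mu/\beta^2)$; clearing the denominator and collecting the terms linear in $\mu$ yields exactly $\mu_j = (s_j^2-1)/(s_j^2/\beta^2-1)$. The only real care needed, and hence the main obstacle, lies in this last step: one must fix the square-root branches consistently so that $\sqrt{1-\mu}$ cancels on the exterior contour and so that squaring introduces no spurious roots. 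I would resolve this by selecting the branches that reduce to the real, undamped problem as $\thetacont,\thetacsg\to 0$, which pins down the correct determination throughout.
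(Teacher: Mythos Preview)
Your proposal is correct and follows essentially the same route as the paper's own proof: reduce to the piecewise-constant $\tilde q$ problem, write the sinusoidal solutions on each subinterval, apply the $\tilde q$-weighted matching conditions at $r$ to obtain $\sin(\kappa_1 r + \kappa_2(R-r))=0$, and solve the resulting implicit equation for $\mu$. Your remark about fixing the square-root branches and the possibility of spurious roots from squaring is a point the paper does not address explicitly, but it does not change the argument.
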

\begin{proof}
  The piecewise constant $p$ and $q$ again lead to a second order ODE with constant coefficients
  for $u_1(x)$ on the interval $[0,r]$ and for $u_2(x)$ on the interval $[r,R]$.
\begin{equation*}
\begin{cases}
 - (1 - \mu \frac{1}{\beta^2}) \frac{d^2}{dx^2} u_1  -(1-\mu)k^2 u_1 = 0 & \quad  0 \le x \le r,\\
 -(1 - \mu) \frac{1}{\gamma^2} \frac{d^2}{dx^2} u_2  -(1-\mu)k^2 u_2 = 0 &\quad  r \le x \le R,
\end{cases}
\end{equation*}
with $u_1(0)=0$ and $u_2(R)=0$.  The solutions $u_1$ and $u_2$ are 
\begin{equation*}
\begin{cases}
  u_1(x) =  A\sin\left(k\beta\sqrt{\frac{1-\mu}{\beta^2-\mu}} x \right) &\quad 0 \le x \le r,\\
  u_2(x) =  B\sin\left(k\gamma(x-R) \right) &\quad  r \le x \le R,\\
\end{cases}
\end{equation*} 
that need to be matched by the conditions 
\begin{equation*}
\begin{cases}
  u_1(r) &= u_2(r);\\
 \lim_{\epsilon \rightarrow 0}\frac{1}{\tilde{q}(r-\epsilon)}u_1^\prime(r-\epsilon) &=   \lim_{\epsilon \rightarrow 0} \frac{1}{\tilde{q}(r+\epsilon)}u_2^\prime(r+\epsilon).
\end{cases}
\end{equation*}
Without loss of generality we can choose $A=1$.  Requiring continuity of the solution in $r$ leads to
\begin{equation*}
  B =  \frac{\sin\left( k \beta\sqrt{(1-\mu)/(\beta^2-\mu)} r\right)}{\sin\left(k\gamma(r-R)\right)}.
\end{equation*}
Inserting this in the matching condition for the derivatives leads, after some trigoniometry, to  
\begin{equation*}
  \sin\left(k\beta\sqrt{\frac{1 - \mu}{\beta^2 - \mu}} r + k\gamma\left(R-r\right) \right) = 0.
\end{equation*}
The eigenvalues are the solutions of 
\begin{equation*}
k\beta\sqrt{\frac{1 - \mu}{\beta^2 - \mu}} r + k\gamma\left(R-r\right) = j \pi
\end{equation*}
and we find that
\begin{equation*}
  \mu_j = \frac{s_j^2 -1 }{s_j^2/\beta^2- 1},
\end{equation*}
where 
\begin{equation*}
 s_j = \frac{1}{r}\left(\frac{j\pi}{k} - \gamma(R-r)\right)
\end{equation*}
\end{proof}

\begin{cor}\label{cor:parametric}
  The eigenvalues of the preconditioned operator $(M^{CSG})^{-1}H$ lie on a parametric curve $t:[-\Re(\eta), \infty) \rightarrow \mathbb{C}$ that
  maps $t$ to
\begin{equation*}
\frac{(t-\i\Im(\eta))^2-1}{(t-\i\Im(\eta))^2/\beta^2 - 1},
\end{equation*}
with $\eta = \gamma(R/r-1)$.  When $\Im(\gamma)=0$, the curve \br{lies on a circle} through $0$, \br{$\beta^2$} and $1$.
\end{cor}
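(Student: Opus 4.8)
The plan is to exhibit the eigenvalues of Proposition~\ref{prop:eigMHcontinu} as samples of a continuous curve obtained by relaxing the discrete index $j$ to a real parameter, and then, in the special case $\Im(\gamma)=0$, to recognize that curve as the image of the real axis under a linear fractional transformation.

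First I would rewrite $s_j$. Since $\eta = \gamma(R/r-1) = \gamma(R-r)/r$, Proposition~\ref{prop:eigMHcontinu} reads
\[
s_j = \frac{j\pi}{kr} - \eta .
\]
The quantity $j\pi/(kr)$ is real and nonnegative, so I replace it by a continuous real parameter $\tau\ge 0$ and split $\eta=\Re(\eta)+\i\Im(\eta)$. Setting $t=\tau-\Re(\eta)$, which ranges over $[-\Re(\eta),\infty)$ as $\tau$ ranges over $[0,\infty)$, we get $s=t-\i\Im(\eta)$. Substituting $s^2=(t-\i\Im(\eta))^2$ into the formula $\mu=(s^2-1)/(s^2/\beta^2-1)$ of the proposition produces exactly the claimed map $t\mapsto \frac{(t-\i\Im(\eta))^2-1}{(t-\i\Im(\eta))^2/\beta^2-1}$, and the eigenvalues $\mu_j$ are precisely its values at $t=j\pi/(kr)-\Re(\eta)$. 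This establishes the parametric curve together with its domain.

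For the second claim I would set $\Im(\gamma)=0$, so that $\eta$ is real and $\Im(\eta)=0$; then $s=t$ is real and, writing $w=t^2\in[0,\infty)\subset\mathbb{R}$, the map collapses to
\[
\mu = \frac{w-1}{w/\beta^2-1} = \frac{\beta^2(w-1)}{w-\beta^2},
\]
a nondegenerate linear fractional transformation of $w$ (its determinant $\beta^2(1-\beta^2)$ is nonzero for $\beta^2\neq 1$, which holds for the interior scaling angle of the preconditioner). By the same line-to-circle property of linear fractional transformations already invoked in Section~\ref{sec:continuouseigenvaluesprecon}, the image of the real axis is a circle or a line. Evaluating at $w=1$, $w=0$ and $w\to\infty$ gives $\mu=0$, $\mu=1$ and $\mu=\beta^2$ respectively, so the curve lies on the circle through $0$, $1$ and $\beta^2$ (a genuine circle rather than a line whenever $\beta^2\notin\mathbb{R}$, i.e.\ whenever the three points are not collinear).

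I do not anticipate a serious obstacle: the first part is a change of variables and the second is an application of a fact about linear fractional transformations that the paper has already used. The only points needing care are obtaining the left endpoint $-\Re(\eta)$ correctly from the reparameterization, and observing that $w=t^2$ sweeps out only a ray of the real axis, so that the eigenvalues lie on, but need not fill, the resulting circle.
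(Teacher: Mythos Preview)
Your proposal is correct and follows essentially the same route as the paper: split $s_j$ into its real and imaginary parts, absorb the real part into a continuous parameter $t\in[-\Re(\eta),\infty)$, and in the case $\Im(\gamma)=0$ reparametrize by $t^2$ to recognize a linear fractional transformation whose image is the circle through $0$, $1$ and $\beta^2$. Your version adds a couple of useful remarks the paper omits (the nondegeneracy check $\beta^2\neq 1$ and the observation that $w=t^2$ only sweeps a ray, so the eigenvalues lie on but need not fill the circle), but the argument is the same.
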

\begin{proof}
Splitting $s_j$ \br{in Proposition~\ref{prop:eigMHcontinu}} into a real and imaginary part leads to 
\begin{equation*}
s_j = \frac{1}{r}\left(\frac{j\pi}{k} -  \Re\left(\gamma\right)(R-r)\right) - \i \frac{1}{r}\Im\left(\gamma\right)(R-r).
\end{equation*}
Define $\eta = \gamma(R/r-1)$, then for each $j\in\mathbb{N}_0$ there is a $t \in [-\Re(\eta), +\infty)\subset\mathbb{R}$ such that
$s_j = t -\i (R/r-1)\Im\left(\gamma\right)$. \br{In the limit when there is no exterior complex scaling, i.e.\ $\Im(\gamma)=0$, we can reparametrize by $\hat{t}=t^2\in\mathbb{R}$ and so the curve reduces to a linear fractional transformation. It follows that the real line is mapped to a circle through the points $0$, \br{$\beta^2$} and $1$.}
\end{proof}

It is important to note that changing the wave number $k$ does not
alter the parametric curve.  Indeed, changing $k$ only modifies the
real part of $s_j$ which leads to a different particular choice
$t$ that gives the position of the eigenvalue $\mu_j$ on the curve.  This means that there is an upper bound for the condition number $\kappa=\frac{\max_j|\mu_j|}{\min_j|\mu_j|}$ of the
preconditioned problem that is independent of $k$ \br{and suggests a fast convergence of the preconditioned Krylov subspace method}.

The spread of the eigenvalues on the parametric curve can change as a function of $k$.  First note that in the limit $j\rightarrow \infty$ the \br{eigenvalues $\mu_j$} go to $\beta^2$. In a similar way, the eigenvalues will accumulate near $\beta^2$ as $k \rightarrow 0$. In the other case, as $k$ \br{gets larger}, the smoothest eigenvalues $\mu_j$ with $j\ll k$ will go to the other end of the curve,
\begin{equation*}
\lim_{j/k\to0}\mu_j = \frac{\gamma^2(R/r-1)^2-1}{\gamma^2(R/r-1)^2/\beta^2-1}\approx 1,
\end{equation*}
\br{leading to a spectrum that is completely spread over the curve.}

\br{We clearly observe this behavior in Figure~\ref{fig:eigMA} where the spectrum of the preconditioned system is plotted for different values of $k$ for the one-dimensional model problem \eqref{eq:helm} with $r=1$, $R=1.25$, outer ECS angle $\thetacont=\frac{\pi}{6}$, inner angle for the preconditioner $\thetacsg=0.18\approx\frac{\pi}{17}$. The circles mark the first $80$ eigenvalues of the continuous problem, as given by Proposition~\ref{prop:eigMHcontinu}. They lie on the parametric curve derived in Corollary~\ref{cor:parametric} which is visualized with a solid line. For a small wave number $k=0.4$, in the upper left subfigure, all eigenvalues $\mu_j$, with $1\leq j\leq80$ lie close to $\beta^2$. In the upper right plot with $k=6.4$ we see that $\mu_{1}$ and $\mu_{2}$ almost reach the other end of the curve, while the remaining eigenvalues $\mu_j$ with $j\geq3$ still lie closer to $\beta^2$. Finally in the lower two subfigures the eigenvalues are more spread along the curve as $k$ grows larger.}

In a similar way the 2D eigenvalues, or for any higher dimension, will be bounded by a parametric curve that is independent of the wave number. In contrast to the one-dimensional case the 2D spectrum will fill up the region bounded by the curve with eigenvalues.

\begin{figure}[h!]
\begin{center}
\includegraphics[width = \textwidth]{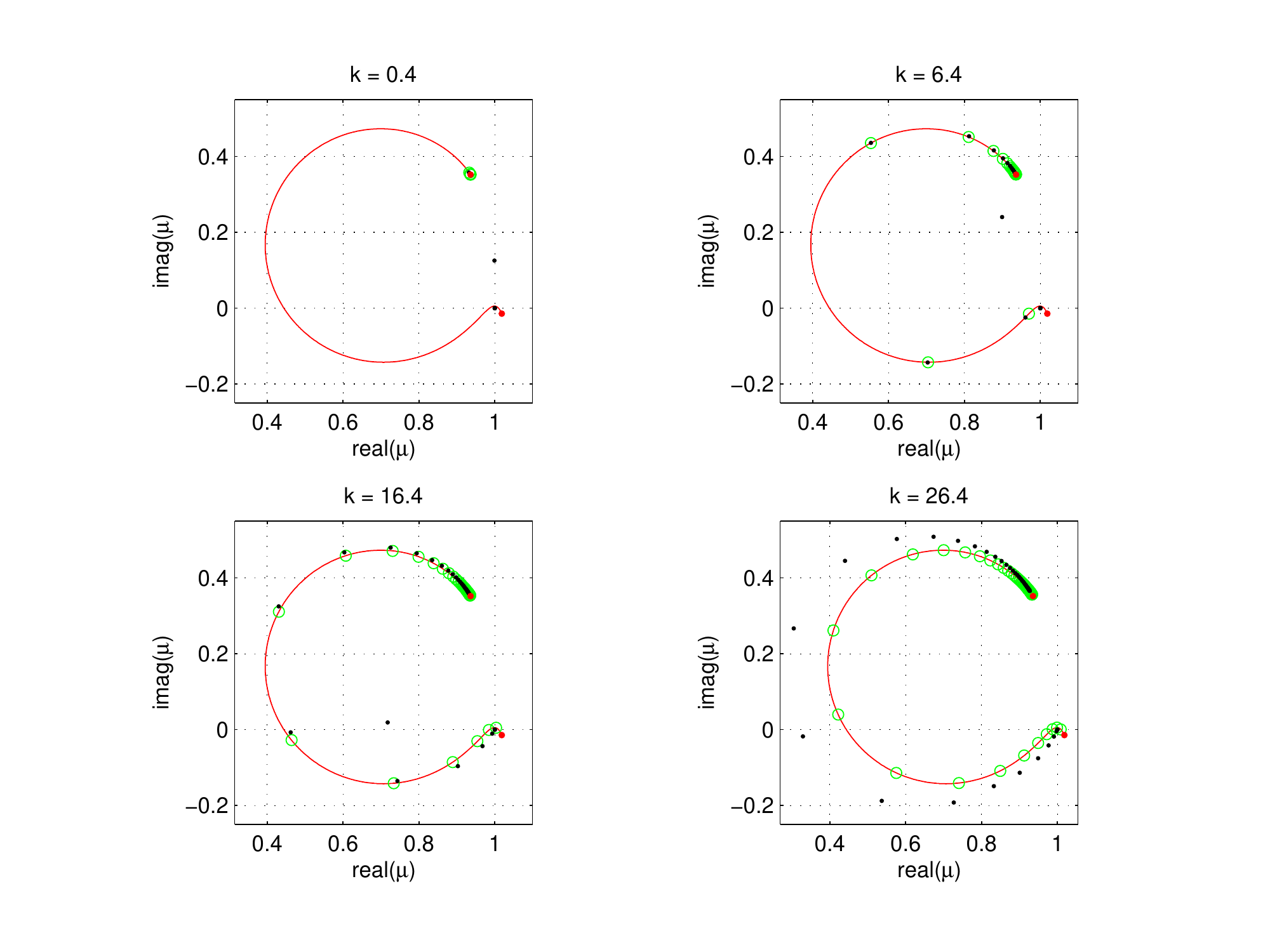}
\caption{First $80$ Eigenvalues of the one-dimensional preconditioned system for different values of the wave number $k$. For the continuous operator ($\circ$) the spectrum lies on a parametric curve (red line) in the complex plane. When $k$ increases the eigenvalues spread over the curve. The spectrum of the discrete operator ($\bullet$) approximates the continuous case for $k=0.4,6.4$ and $16.4$ leading to an initally bounded condition number in Figure~\ref{fig:conditionnumber}. For $k=26.4$ the discrete eigenvalues lie outside the curve resulting in a larger condition number. For higher dimensional problems the region bounded by the curve will fill up with eigenvalues.}
\label{fig:eigMA}
\end{center}
\end{figure}

%
%
\section{Discrete operator}\label{sec:discrete}
\subsection{Deviations from the continuous problem}\label{ssec:deviation}
However, when the problem is discretized, with for example finite
differences, the Krylov convergence rate can differ significantly from
the bounds predicted by the analysis of the continuous problem in
Section~\ref{sec:continuouseigenvalues} and \ref{sec:continuouseigenvaluesprecon}.

\br{
Indeed, \br{Figure~\ref{fig:gmres_iterations} shows} the number of GMRES iterations to solve the Helmholtz problem with discretization matrix $H_h$, preconditioned with the complex shifted grid matrix $M_h^{CSG}$ that is exactly inverted, as a function of the wave
number $k$. The results are for a 1D (solid line) and a 2D (dashed line) problem with $80$ grid points per dimension, $n=64$ interior grid points and $m=16$ additional points for the ECS layer. For the 2D problem we recognize three regions in the
convergence rate.  First, between $k=0$ and $k\approx16.4$, the number of iterations is ramped up from a few to about $18$. The second region is between $k\approx16.4$ and $k\approx21$, where the number of iterations remains constant. Finally from $k=21$ on the number of iteration rises again.  This is in contrast to the analysis of the continuous problem that predicts a convergence rate that is independent of $k$, once it is large enough. These observations can be explained with the help of the spectrum of the discrete preconditioned system $M_h^{-1}H_h$ of the one-dimensional problem shown in Figure~\ref{fig:eigMA}.
}

\br{
For the wave numbers in the first region, $0\leq k\leq16.4$, the spectrum of the discrete problem ($\bullet$) is a good approximation for the spectrum of the continuous continuous problem ($\circ$), except for one or two spurious eigenvalues. As a consequence, the spectrum lies along the parametric curve (solid line). The initial growth in the convergence behavior is due to the fact that the eigenvalues are accumulated near $\beta^2$ for small $k$ and start spreading over the curve towards $1$ for larger $k$, leading to an increasing condition number $\kappa = \frac{\max_j{|\mu_j|}}{\min_j{|\mu_j|}}$ where $1\leq j\leq80$. In Figure~\ref{fig:conditionnumber} the condition number is plotted as a function of the wave number $k$. Indeed, first the condition number is close to $1$ until eigenvalue $\mu_{80}$ moves significantly along the curve reaching the point with the smallest possible absolute value for $k\approx2.5$. The next eigenvalue $\mu_{79}$ gives rise to a second peak around $k\approx5$ when it reaches that same minimal point on the curve. As the spectrum starts spreading more equally over the curve, see e.g.\ lower left subfigure in Figure~\ref{fig:eigMA}, the condition number seems to converge to $\kappa\approx 2.5$ and the number of GMRES iterations stagnates around $18$ for the second region $k\approx16.4$ to $k\approx21$ on Figure~\ref{fig:gmres_iterations}.
}

\br{
However, from $k\approx16.4$ on the discrete condition number starts behaving differently from what the continuous eigenvalues predict. Around $k\approx21$ it even grows above the upper bound given by the curve. This is because the eigenvalues of the discrete preconditioned system start deviating from the continuous spectrum, see e.g.\ lower right subfigure in Figure~\ref{fig:eigMA}. Whereas the continuous eigenvalues remain on the curve, the discrete spectrum grows outside the curve from a certain critical wave number. This divergence between the continuous and the discrete preconditioned spectrum finds its origin in the spectrum of the original Helmholtz operator. At the end of Section~\ref{sec:continuouseigenvalues} we mentioned that only the first few eigenvalues $\lambda_j$ in \eqref{eq:eigenvalues} of the continuous Helmholtz operator $H$ are well approximated by the eigenvalues of the discretization matrix $H_h$ given by \eqref{eq:eigcond}. In Figure~\ref{fig:pitchfork} we see that the continuous spectrum ($\times$) lies along a line in the complex plane, whereas the discrete spectrum ($\bullet$) branches from this line at a certain point $t_b\in\mathbb{C}$. The location of this point will be an indicator for the start of the third region in the convergence behavior of GMRES.
}
\br{
For the 2D problem the three different convergence regions are more pronounced than for the 1D problem because the region bounded by the curve gets filled with extra eigenvalues.
}

\begin{figure}[h!]
\begin{center}
\includegraphics[width = \textwidth]{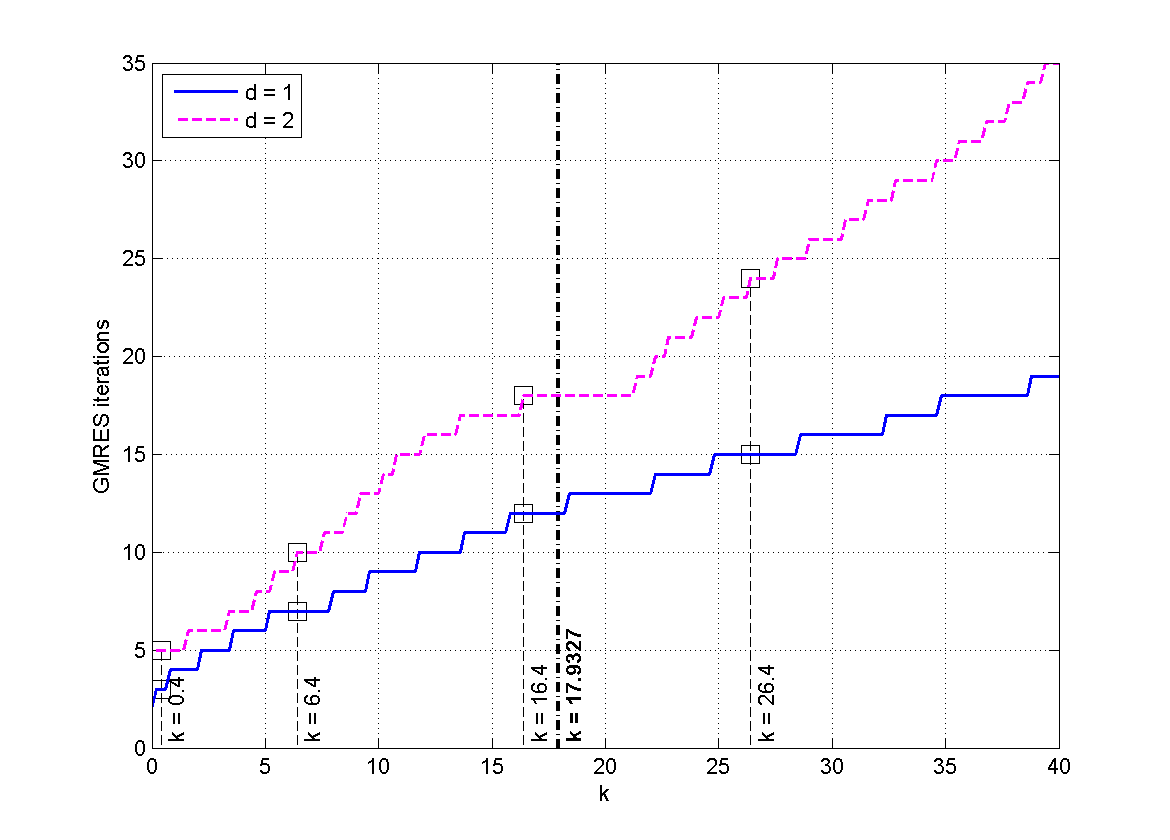}
\caption{Number of GMRES iterations to solve the one-dimensional (solid line) and two-dimensional (dashed line) Helmholtz problem as a function of the constant wave number $k$ with $80$ grid points per dimension. For the 2D problem three regions are clearly distinguishable. Initally the number of iterations increases with $k$ until a stagnation of 18 iterations is reached around $k=16.4$. For wave numbers larger than $k\approx21$ the number of iterations starts increasing again. Four values of $k$ for which the spectrum of the preconditioned system is shown in Figure~\ref{fig:eigMA} are marked ($\square$).}
\label{fig:gmres_iterations}
\end{center}
\end{figure}

\begin{figure}[h!]
\begin{center}
\includegraphics[width = 0.9\textwidth]{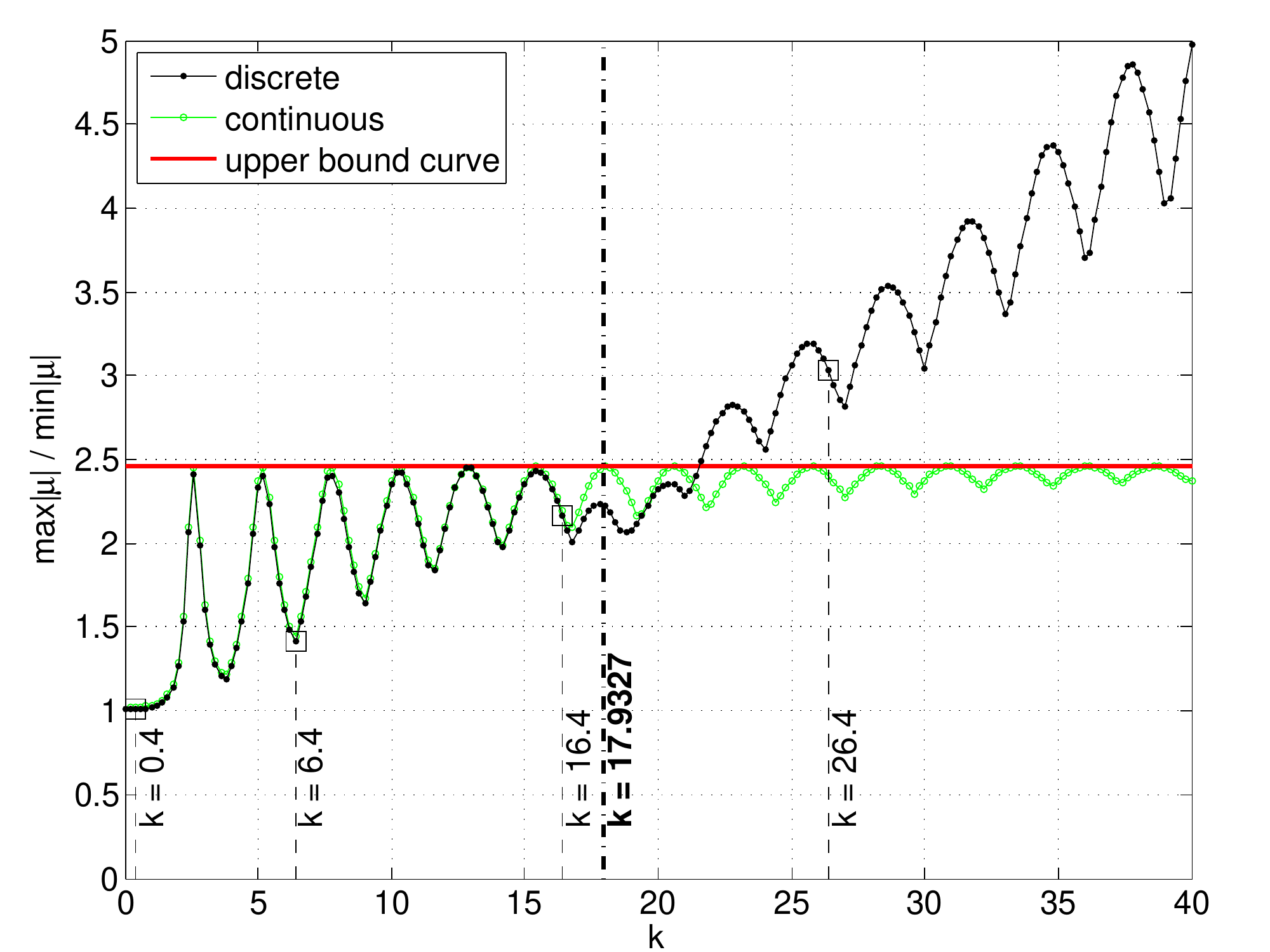}
\caption{Condition number of the continuous ($\circ$) and discrete ($\bullet$) preconditioned system. The condition number of the continuous operator converges to a fixed value determined by the parametric curve in Corollary~\ref{cor:parametric} whereas the discrete operator shows diverging behavior from a certain wave number $k\approx16.4$. Four values of $k$ for which the spectrum of the preconditioned system is shown in Figure~\ref{fig:eigMA} are marked ($\square$).}
\label{fig:conditionnumber}
\end{center}
\end{figure}

%
%
\subsection{Predicting the branch point}\label{ssec:branchpoint}
Next we derive an explicit formula for the \br{approximate} position of the branch point in the spectrum of the discretization matrix, see Figure~\ref{fig:pitchfork}. From this point on, the eigenvalues of the discrete
problem start to deviate significantly from the eigenvalues of the
continuous problem.  This branch point will predict from which $k$ on
we can expect a rising cost of the numerical solution method.  A
surprising result of this section is that this branch point does not
shift with the order of the discretization.

\begin{prop}\label{prop:branchpoint}
  Consider a one-dimensional grid as in \eqref{eq:ecsgrid} defined on an ECS domain consisting of two parts, $[0,r]$
  for the interior with $n$ grid points, and the complex interval $[r,R_z]=[r,z(R)]$ for the
  exterior part with $m$ grid points. The smallest eigenvalues of the negative
  Laplacian discretized on this ECS grid $-L_h$, with zero Dirichlet conditions at the
  boundaries $0$ and $R_z$, lie along the complex line
\begin{align*}
t(\rho) = \left(\frac{\rho}{R_z}\right)^2,  \quad \mbox{with }\rho>0,
\end{align*}
close to the eigenvalues of the continuous operator $t_j = \left(\frac{j\pi}{R_z}\right)^2$ with $j\in\mathbb{N}_0$.
For larger eigenvalues the spectrum of $-L_h$ splits into two branches around
the point $t_b = \left(\frac{\rho_b}{R_z}\right)^2$ with
\begin{align*}
\rho_b = \frac{|R_z|^2}{r\Im(R_z)}W\left(4n\Im(R_z)\left|\frac{1}{R_z}\sqrt{\frac{R-r}{R_z-R}}\right|\right)
\end{align*}
where $R_z=z(R)$ and $W(.)$ is the Lambert-W function.
\end{prop}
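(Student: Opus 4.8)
The plan is to locate the branch point directly from the discrete eigenvalue condition $F(t)=0$ in \eqref{eq:eigcond}, tracking the smooth eigenvalues along the parametrization $t=(\rho/R_z)^2$ inherited from the continuous analysis of Section~\ref{sec:continuouseigenvalues} and detecting where this parametrization ceases to produce a solution. First I would confirm that for small $\rho$ the condition $F(t)=0$ reduces, after using $p\approx\tfrac{1}{2}\sqrt{t}\,h$, $q\approx\tfrac{1}{2}\sqrt{t}\,\gamma h$ and $\cos p/\cos q\approx 1$, to the standing-wave phase relation $2np+2mq=j\pi$; since $2np+2mq\approx\sqrt{t}\,R_z$ this reproduces $t_j=(j\pi/R_z)^2$ and shows the smooth eigenvalues lie on the line $t(\rho)=(\rho/R_z)^2$, as claimed.

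The central observation is that along this line the phases $2np$ and $2mq$ are genuinely complex. A short computation gives $\Im(2np)=-\rho/C$ and $\Im(2mq)=+\rho/C$ with $C=|R_z|^2/(r\,\Im(R_z))$, consistent with $2np+2mq$ being real to leading order. Hence, as $\rho$ grows, the tangents saturate, $\tan(2mq)\to\i$ and $\tan(2np)\to-\i$, and the dominant balance of $F$ degenerates to $-1+\cos p/\cos q$, which cannot vanish generically. The branch point is therefore the place where the \emph{first exponentially small corrections} to this saturated limit are no longer negligible. I would expand $\tan(2mq)=\i(1-2e^{4\i mq}+\cdots)$, $\tan(2np)=-\i(1-2e^{-4\i np}+\cdots)$ and $\cos p/\cos q=1+\tfrac{1}{2}(q^2-p^2)+\cdots$, write the phase as $2np+2mq=j\pi+\delta$, and substitute into \eqref{eq:eigcond}. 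Because $e^{4\i mq}\approx(1+2\i\delta)e^{-4\i np}$, the leading exponentials cancel and $F=0$ collapses to a relation of the form $\delta\propto(q^2-p^2)\,e^{4\i np}$.

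The modulus of this correction behaves like $\rho^2 e^{2\rho/C}$, an algebraic factor amplified by an exponential arising from $|e^{4\i np}|=e^{2\rho/C}$, i.e.\ from the imaginary part that the complex contour forces onto the interior wavenumber. I would then \emph{define} the branch point by the breakdown condition $|\delta|\sim\mathrm{const}$, at which the smooth parametrization fails and the eigenvalue curve bifurcates. This yields an equation of the form $\rho^2 e^{2\rho/C}=\mathrm{const}$; taking a square root puts it in the canonical shape $\tfrac{\rho}{C}e^{\rho/C}=z_0$, whose real solution is $\rho_b=C\,W(z_0)$ with the principal branch of the Lambert-W function. Unwinding the constant, and using $R_z-R=(R-r)(\gamma-1)$ so that the factor $\sqrt{(R-r)/(R_z-R)}$ emerges from the square root of the algebraic prefactor, gives the stated argument of $W$; finally $t_b=(\rho_b/R_z)^2$.

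The main obstacle is the asymptotic bookkeeping of the first exponential correction: since the leading exponentials cancel, one must descend to the $\delta$-term, track the algebraic prefactor $q^2-p^2$ (including the discrete-dispersion corrections to $p$ and $q$), and fix the $O(1)$ threshold, in order to recover both the precise constant and the exact square-root factor inside $W$. A secondary consistency check, which explains the claimed insensitivity to the discretization order, is that only the small-argument and large-imaginary-part asymptotics of $p(t)$ and $q(t)$ enter the leading balance; these are governed by the continuous contour geometry ($R_z$, $r$, $\gamma$) rather than by the particular finite-difference stencil, so a higher-order scheme alters $p(t)$ and $q(t)$ only at higher order in $h$ and leaves the leading $\rho_b$ unchanged.
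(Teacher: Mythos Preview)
Your proposal is correct and lands on the same breakdown criterion as the paper, $\rho^2 e^{2\rho/C}\sim\text{const}$ with $C=|R_z|^2/(r\,\Im(R_z))$, solved via the Lambert-$W$ function. The route, however, differs from the paper's. The paper does not work with the tangent form of $F$ and its saturation; instead it clears denominators to obtain the product form $F_1(t)=\sin(2np)\cos(2mq)\cos q+\cos p\,\cos(2np)\sin(2mq)$, Taylor expands $p\approx\tfrac12\sqrt t$, $q\approx\tfrac12\gamma\sqrt t$ for small $|t|$ and $\cos q$ in $\varepsilon=\gamma-1$, and thereby produces an explicit additive splitting $F_2(t)=\sin\!\bigl((n+m\gamma)\sqrt t\bigr)-\tfrac12\sin(n\sqrt t)\cos(m\gamma\sqrt t)\tan(\tfrac12\sqrt t)\sqrt t\,\varepsilon$. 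Along the line $t=(\rho/(n+m\gamma))^2$ the first term is real and bounded by $1$, while the modulus of the second grows like $|\varepsilon|\,\rho^2 e^{2\rho/C}$ after a product-to-sum step; setting that modulus to order one and taking a square root gives the Lambert-$W$ equation directly.

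What each buys: the paper's decomposition is more elementary and makes the algebraic prefactor (hence the exact argument of $W$) drop out with almost no bookkeeping, at the price of a somewhat opaque origin for the exponential. Your tangent-saturation argument is more mechanistic---it explains transparently that the exponential $e^{2\rho/C}$ is precisely $|e^{4\i np}|$, i.e.\ the growth forced by the negative imaginary part of the interior phase---but, as you note, recovering the constant inside $W$ requires carefully cancelling the leading exponentials and tracking $q^2-p^2$; the paper sidesteps that by never letting those exponentials appear separately. Your closing remark on discretization-order insensitivity matches the paper's numerical observation and is a nice addendum, since the paper does not argue it analytically.
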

\begin{proof}
  The spectrum of the Laplacian discretized on an ECS grid has a
  pitchfork shape. In order to find the point where the pitchfork
  splits we start from the condition \eqref{eq:eigcond} that has the eigenvalues of the discrete Laplacian $-L_h$ as solutions. In the rest of this discussion we will consider the scaled Laplacian $L = -h^2L_h$. The eigenvalues of $L$ are again
the solutions of \eqref{eq:eigcond}, but with
$p(t)=\frac{1}{2}\arccos(1-\frac{t}{2})$,
$q(t)=\frac{1}{2}\arccos(1-\frac{t}{2}\gamma^2)$ instead. The
condition \eqref{eq:eigcond} is equivalent to
\begin{align}\label{eq:eigcond1}
F_1(t) \equiv \sin(2n p(t))\cos(2m q(t))\cos(q(t))+\cos(p(t))\cos(2n p(t))\sin(2m q(t)) = 0,
\end{align}

Since we are interested in the smallest eigenvalues of $L$ we can use the approximate condition
\begin{align*}
F_2(t) \equiv \sin\left(\left(n+m\gamma\right)\sqrt{t}\right)-\frac{1}{2}\sin\left(n\sqrt{t}\right)\cos\left(m \gamma\sqrt{t}\right)\tan\left(\frac{\sqrt{t}}{2}\right)\sqrt{t}\varepsilon = 0,
\end{align*}
with $\varepsilon=\gamma-1$.\\

This is easily derived from \eqref{eq:eigcond1} by using the Taylor series $p(t) = \frac{\sqrt{t}}{2} +\mathcal{O}(|t|^{3/2})$ and $q(t) = \gamma\frac{\sqrt{t}}{2} +\mathcal{O}(|t|^{3/2})$,
for $|t| \ll 1$ and substituting $\gamma \equiv 1+\varepsilon$. Moreover, since $|\varepsilon|<1$ for realistic exterior complex scaling with an ECS angle $\theta_\gamma<\frac{\pi}{4}$, we have used $\cos\left(\gamma\frac{\sqrt{t}}{2}\right) = \cos\left(\frac{\sqrt{t}}{2}\right) -\frac{1}{2}\sin\left(\frac{\sqrt{t}}{2}\right)\sqrt{t}\varepsilon +\mathcal{O}(|t\varepsilon^2|)$.\\

We will now look at the evaluation of function $F_2$ along the complex line
\begin{equation*}
t(\rho) = \left(\frac{\rho}{n + m\gamma}\right)^2 \quad \mbox{with }\rho>0. 
\end{equation*}
It returns real numbers between $-1$ and $1$ for the first term of $F_2$, and complex numbers for the second term. The
latter term is small, for small $\rho$, and thus the roots of $F_2$ will approximately be the roots $t_j=\left(\frac{j\pi}{n + m\gamma}\right)^2$, with $j\in\mathbb{N}_0$, of the first term. The eigenvalues will branch from the line $t(\rho)$ when the second term of $F_2$ becomes more important. This is when,
\begin{align*}
& |\frac{1}{2}\sin\left(n\frac{\rho}{n+m\gamma}\right)\cos\left(m\gamma\frac{\rho}{n+m\gamma}\right)\tan\left(\frac{\rho}{2(n + m\gamma)}\right)\frac{\rho}{n + m\gamma}\varepsilon| \approx 1, \\
\Leftrightarrow& |\frac{\varepsilon}{8}\sin\left(\rho\left(1-\frac{2m\gamma}{n+m\gamma}\right)\right)\left(\frac{\rho}{n + m\gamma}\right)^2| \approx 1,
\end{align*}
and after using the identity $R_z=r+(R-r)\gamma$,
\begin{align*}
\Leftrightarrow& |\frac{\varepsilon}{8}\sin\left(\rho\left(\frac{2(R_z-r)}{R_z}-1\right)\right)\left(\frac{\rho h}{R_z}\right)^2| \approx 1, \\
\Leftrightarrow& \frac{h|\sqrt{\varepsilon}|}{4r|R_z\Im(\frac{1}{R_z})|} \rho r|\Im(\frac{1}{R_z})| e^{\rho r|\Im(\frac{1}{R_z})|} \approx 1, \\
\Leftrightarrow& \rho \approx \frac{W(c)}{r|\Im(\frac{1}{R_z})|},
\end{align*}
where $W(c)$ is the Lambert-W function, \br{defined such that $c=W(c)e^{W(c)}$,} and evaluated in
\begin{equation*}
c=\frac{4r|R_z\Im(\frac{1}{R_z})|}{h|\sqrt{\varepsilon}|}= 4n\Im(R_z)\left|\frac{1}{R_z}\sqrt{\frac{R-r}{R_z-R}}\right|,
\end{equation*}
with $\varepsilon = \gamma-1=\frac{R_z-R}{R-r}$. The point $t_b$ along the line $t(\rho)$ where the pitchfork splits into two branches is now approximately given by
\begin{align*}
& \rho_b = \frac{W(c)}{r|\Im(\frac{1}{R_z})|} = \frac{|R_z|^2}{r\Im(R_z)}W\left(4n\Im(R_z)\left|\frac{1}{R_z}\sqrt{\frac{R-r}{R_z-R}}\right|\right),\\
\Rightarrow & t_b = \left(\frac{\rho_b}{n+m\gamma}\right)^2 = \left(\frac{\rho_b h}{R_z}\right)^2.
\end{align*}
So for the eigenvalues of the unscaled operator $L$ we have $t_b  = \left(\frac{\rho_b}{R_z}\right)^2$.
\end{proof}

The point $t_b=\left(\frac{\rho_b}{R_z}\right)^2$ predicts the point
in the spectrum of the discrete Laplacian where the pitchfork
splits. The smallest eigenvalues lie close to
$\frac{j^2\pi^2}{R_z^2}$, with $j\in\mathbb{N}_0$ such that
$j\pi\leq\rho_b$. This is illustrated in Figure~\ref{fig:splitpoint}
where the 32 smallest eigenvalues are plotted for three different grid
sizes $n$, together with the branch point predictions $t_b$, for the
ECS domain $[0,r]\cup(r,R_z]=[0,1]\cup(1,1+0.25e^{\imath\pi/6}]$.
\begin{figure}[h!]
\begin{center}
\includegraphics[width=\textwidth]{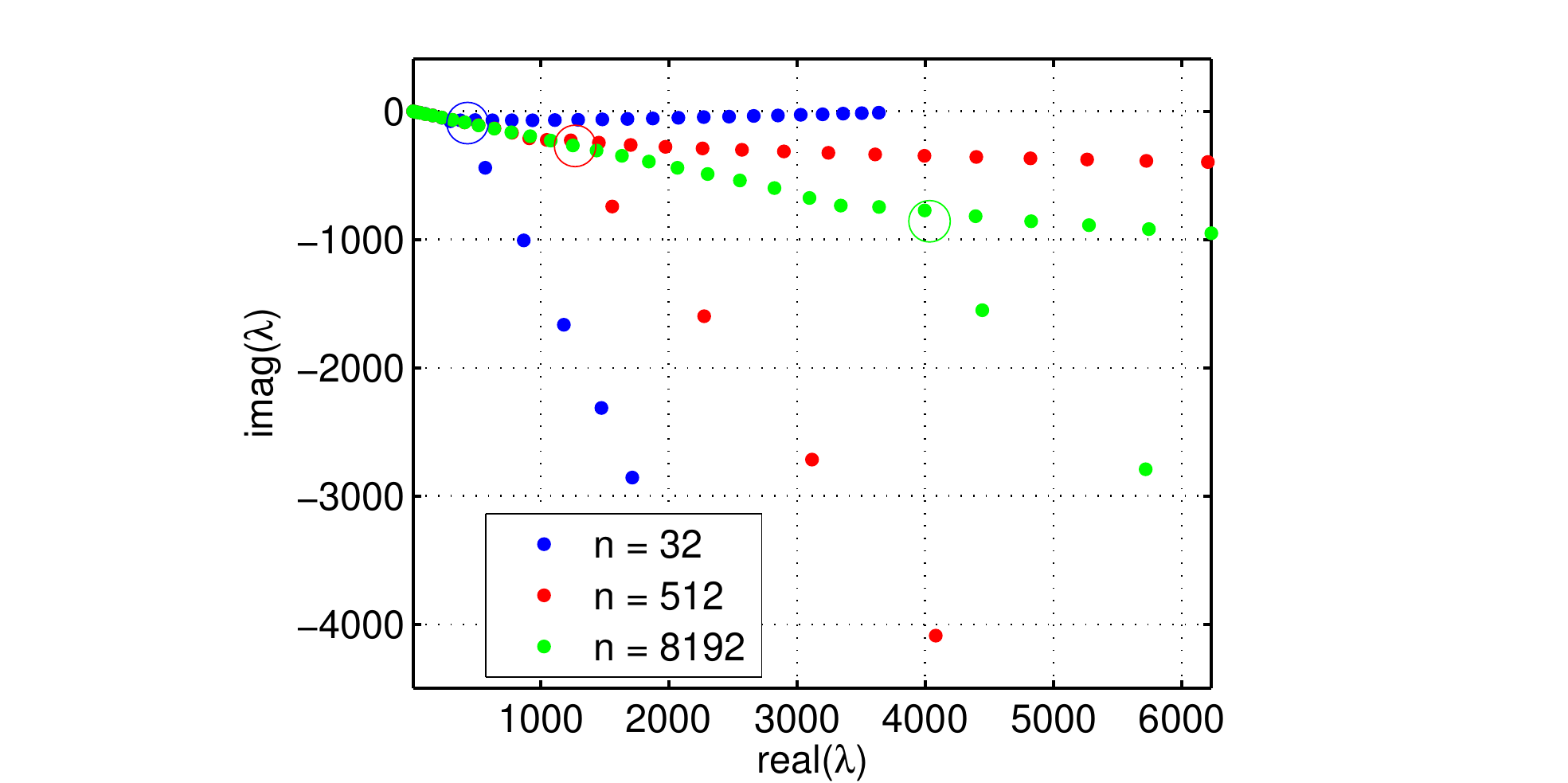}
\caption{The 32 smallest eigenvalues ($\bullet$) of the discretized
  Laplacian for $n=32$, $n=512$ and $n=8192$. The branch point $t_b$ ({\Large $\circ$}) in
  the spectrum moves further in the complex plane as predicted by the
  formula in Proposition~\ref{prop:branchpoint}.}\label{fig:splitpoint}
\end{center}
\end{figure}

Figure~\ref{fig:splitpoint_abs} shows the distance to the origin $|t_b|$ of the predicted branch point
as a function of the interior grid size $n$, the other domain
parameters are fixed. The branch point was also detected
experimentally by measuring the deviation from the line $t(\rho) =
\left(\frac{\rho}{R_z}\right)^2$ with $\rho>0$. As the grid size $n$
increases, the tail of the pitchfork grows proportional to the square
of the Lambert W-function, $|t_b| \sim W(n)^2$, and not according to
the order of discretization.

\begin{figure}[h!]
\begin{center}
\includegraphics[width=\textwidth]{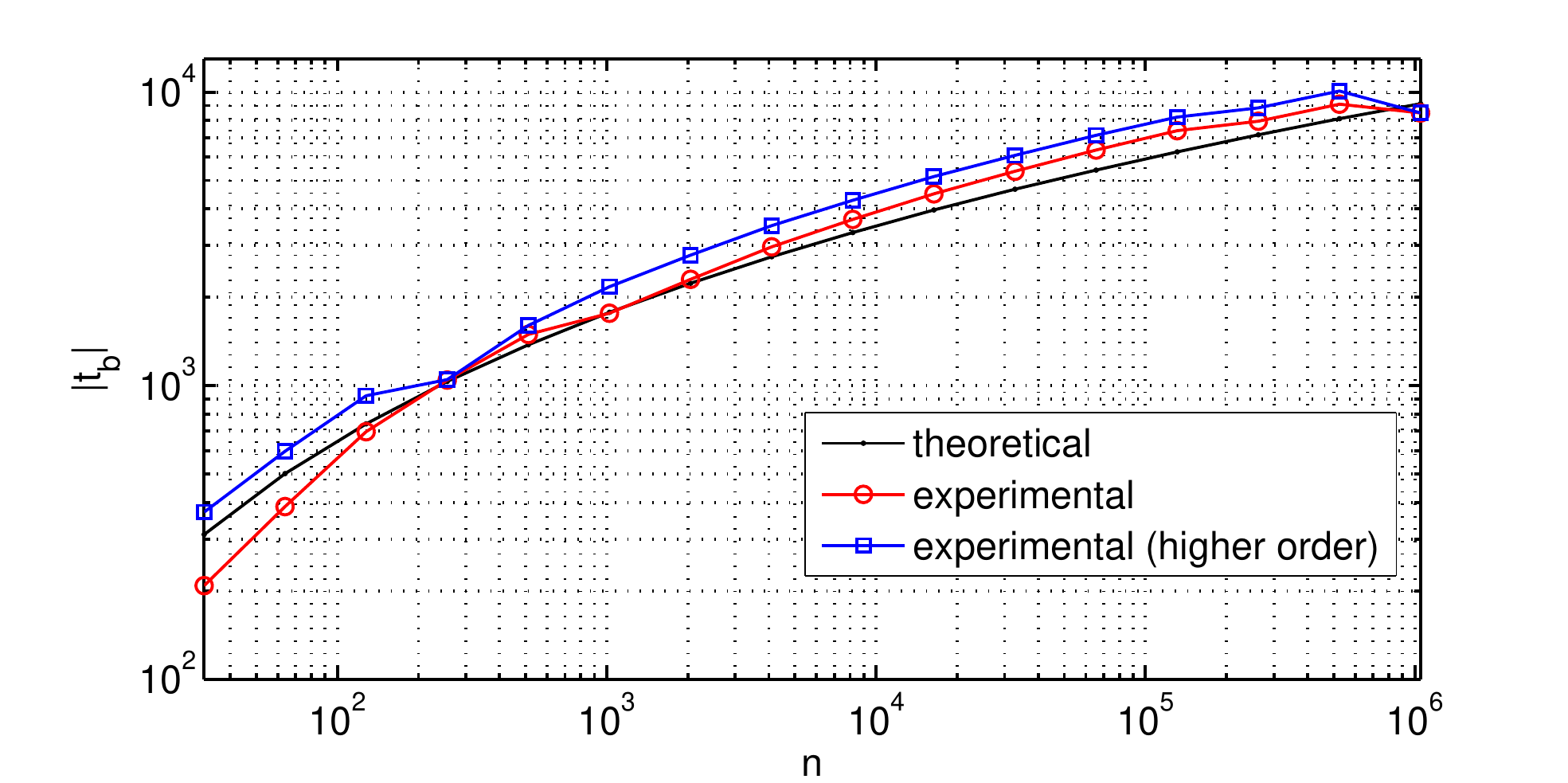}
\caption{The absolute value of the branch point $t_b$ of the pitchfork in the spectrum of the discretized Laplacian for $n=2^{j}$ with $j=5,\ldots,20$. As predicted by the formula ($\bullet$) in Proposition~\ref{prop:branchpoint}, measured experimentally ($\circ$) and with a higher order scheme in the turning point $r$ ($\square$). As the grid size $n$ increases, the length of the typical line of smooth eigenvalues grows proportional to the square of the Lambert W-function, $|t_b| \sim W(n)^2$, and not as the order of the discretization.}\label{fig:splitpoint_abs}
\end{center}
\end{figure}


\br{
The spectrum of the indefinite Helmholtz discretization matrix $H_h$ is achieved by shifting the pitchfork spectrum of the Laplacian to the left in the complex plane over a distance determined by the wave number $k$. We can now predict the value $k$ for which the eigenvalues of the discrete preconditioned system $M_h^{-1}H_h$ start growing outside the parametric curve of the continuous preconditioned spectrum. Since $t_b$ marks the point in the pitchfork shaped spectrum of $H_h$ where the badly-approximated continuous eigenvalues lie on the right hand side, we expect this to happen when $t_b$ is of the same order of magnitude as the smooth eigenvalues, this means when $|t_b|=k^2$. In Figures~\ref{fig:gmres_iterations} and \ref{fig:conditionnumber} the critical wave number $k_b\equiv\sqrt{|t_b|}=17.9327$ is marked. Indeed, for wave numbers $k\leq k_b$ the eigenvalues of the continuous preconditioned operator lead to a good prediction of the effective discrete condition number and the rise in GMRES iterations is a direct consequence of the spreading along the parametric curve. After a short region of stagnation where the curve is densely filled with eigenvalues, the number of iterations increases again, however, this time due to the growing deviaton of the discrete eigenvalues from the continuous eigenvalues. In the next section we will study the convergence behavior for wave numbers in this third region with numerical experiments.
}

%
%
%
\section{Numerical experiments}\label{sec:numerical}

In this section we analyze the performance of a Krylov subspace method
applied to a Helmholtz problem with constant wave number $k$ in a square domain
$\Omega=(0,1)^2$ with a centered point source
\begin{equation}\label{eq:helm2D}
f(x,y) =
\begin{cases}
1,\quad \mbox{if } x=1/2=y,\\
0,\quad \mbox{elsewhere}.
\end{cases}
\end{equation}
This two-dimensional problem is built with Kronecker products of the
one-dimensional model problem \eqref{eq:helm} with outgoing wave
boundary conditions in every direction. All boundaries are therefore
extended with an ECS layer with an angle $\thetacont = \frac{\pi}{6}$
to absorb outgoing waves. The results from the previous sections are
still useful if we take into account that the spectrum of the
two-dimensional operator is the set of all possible sums of two
eigenvalues of the one-dimensional case. This means the spectrum of
the discretization matrix $H_h$ looks like a sum of pitchforks now, as
discussed in Section~\ref{ssec:deviation}, with three points close to
the real axis $t=-k^2$, $t=\frac{4}{h^2}-k^2$ and
$t=\frac{8}{h^2}-k^2$.  These eigenvalues correspond respectively to
the smoothest mode on the domain, the eigenmode which is oscillatory in
one dimension and smooth in the other and finally a mode that is oscillatory in
both dimensions.

The complex stretched grid preconditioning matrix $M_h^{CSG}$ is
constructed by discretizing the problem on a complex stretched grid
with a small inner angle $\thetacsg = 0.18 \approx \frac{\pi}{17}$. This ensures
that for every level in the multigrid hierarchy the eigenvalues are
bounded away from zero.  The angle for the outer ECS layers is kept at $\thetacont =
\frac{\pi}{6}$ as in the original Helmholtz problem as illustrated in
Figure~\ref{fig:domaincsg}. For a detailed description of the spectrum of
this preconditioning matrix we refer to \cite{JCP-paper} and
\cite{polynomialsmoother}.

As a consequence the preconditioning matrix $M_h^{CSG}$ can
efficiently be inverted with a multigrid method with either \br{three steps of GMRES as a smoother substitute, denoted as GMRES($3$), or a specific
polynomial smoother as suggested in \cite{polynomialsmoother}}. However, because the smoother can differ each
application the actual preconditioner is not the same in every outer
Krylov step. Therefore FGMRES, the flexible GMRES method
\cite{SaadFGMRES}, is used as outer Krylov subspace methods. We
discuss the performance of preconditioned FGMRES before convergence to
a residual norm of order $10^{-6}$. The preconditioning matrix
$M_h^{CSG}$ is approximately inverted with one V(1,1)-cycle with
GMRES($3$) as smoother. The experiments are all run in \textsc{Matlab}\texttrademark\ on two
quad core Intel\texttrademark\ Xeon CPUs (E5462 @ 2.80GHz).

Figure~\ref{fig:krylov_fgmres} shows the convergence results for
preconditioned FGMRES to solve the 2D Helmholtz problem \eqref{eq:helm2D} with a
residual norm below $10^{-6}$ for wave numbers ranging from $k=15$
to $k=180$. For these wave numbers the smoothest eigenvalues of the
discretization matrices have a negative real part. Each curve shows
the same experiment for a fixed grid size $n$ in one dimension, the
grid size of the ECS layer is related as $m=n/4$. Just like the wave numbers the grid sizes are purposely chosen over a wide range as well,
from $n=16$ to $n=2048$ in one dimension, in order to expose the full effect of the preconditioning on the convergence behavior of FGMRES. A discussion on the physical accuracy of the grid sizes lies not within the scope of this
analysis. As we are merely interested in the convergence behavior of
Krylov subspace methods we explain these curves as a function of the
increasing wave number $k$.

For each grid size both the convergence rate and the number of
iterations grow initially as a function of $k$ up to a peak where
$k\approx\frac{2}{h}=2n$. This corresponds to the first critical point
$t=\frac{4}{h^2}-k^2$ where the pitchfork in the spectrum of $H_h$
nearly touches the real axis. These eigenvalues correspond to
eigenmodes that oscillate rapidly in one direction while they are
smooth in the other.  There is now an eigenvalue of $H_h$ near the
origin and the preconditioned system $M_h^{-1}H_h$ obviously suffers
from this too. As the wave number $k$ increases more, the pitchfork is
shifted further to the left in the complex plane. The convergence
improves slightly until the second critical point
$t=\frac{8}{h^2}-k^2$ comes too close to the origin for
$k\approx\frac{2\sqrt{2}}{h}=2\sqrt{2}n$. After this, the spectrum has
completely shifted into the negative real part of the complex plane,
making the Helmholtz matrix negative definite. This is observed on the
curves as a sudden improvement in convergence. As a reference these
experiments were repeated for the smallest grid sizes with regular
GMRES and an exact inversion of the preconditioner $M_h^{CSG}$, in
order to eliminate the effect of the approximate multigrid inverse. In
Figure~\ref{fig:krylov_gmres_exact} the described convergence behavior
is then more pronounced.

\begin{figure}[h!]
\begin{center}
\subfloat[The convergence rate of FGMRES, averaged over all iterations.]{
\includegraphics[width=10cm]{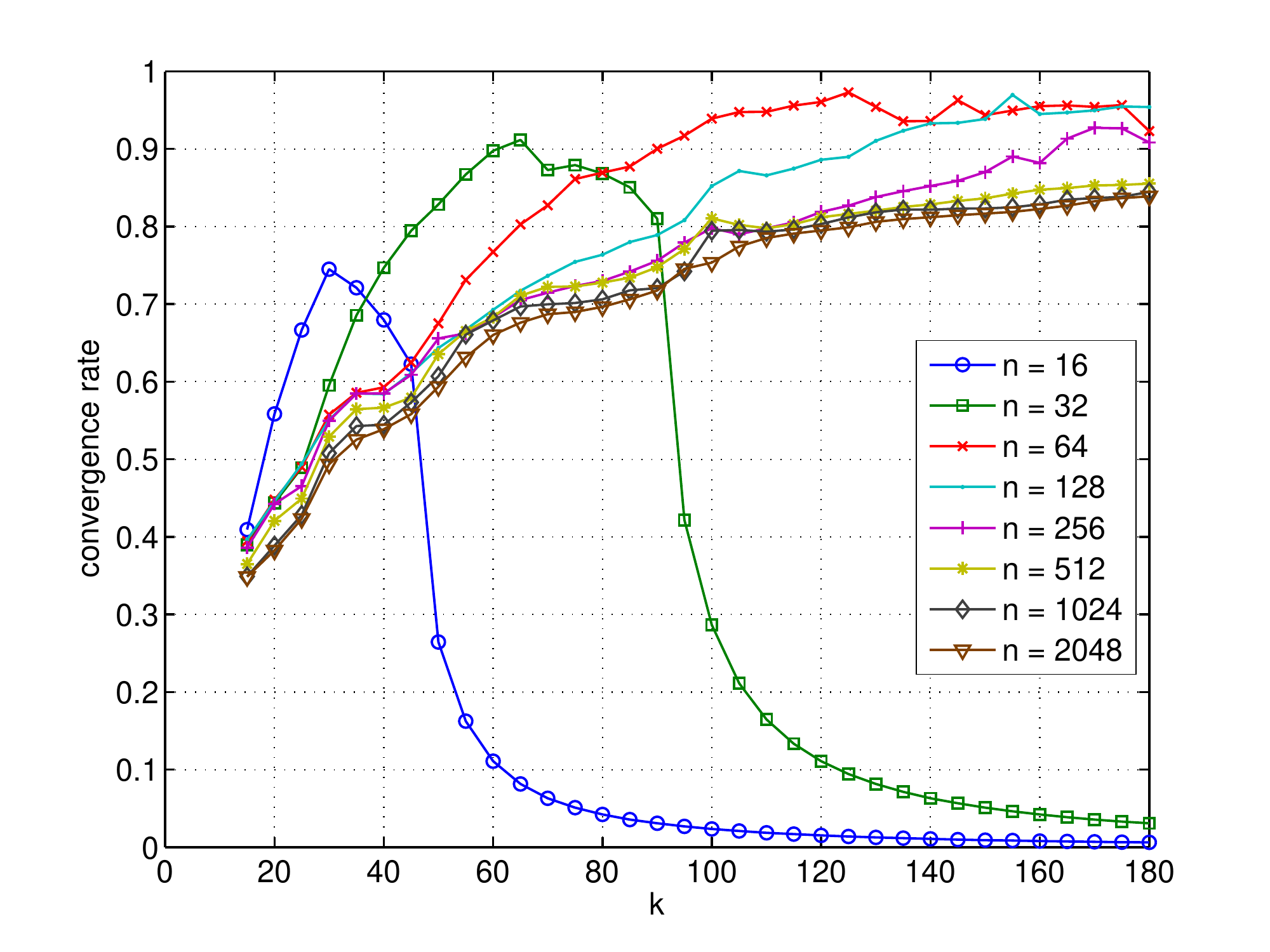}\label{fig:krylov_rate_fgmres}
}\\
\subfloat[The number of FGMRES iterations.]{
\includegraphics[width=10cm]{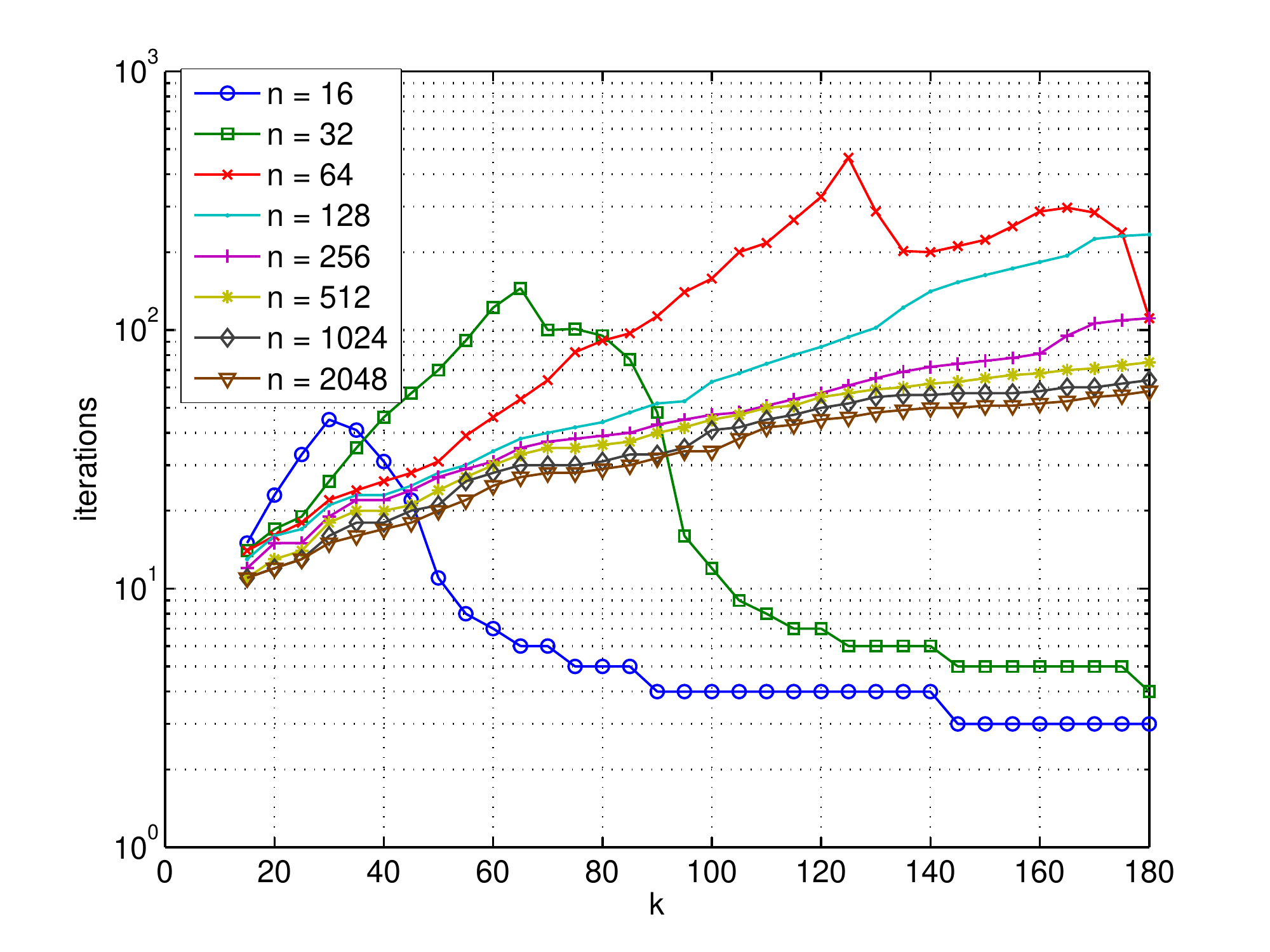}\label{fig:krylov_iter_fgmres}
}
\caption{Convergence results of FGMRES as a function of the wave number $k$ for the homogeneous 2D Helmholtz problem. The preconditioner is approximately inverted with one V(1,1)-cycle. The lines represent different interior grid sizes $16\times16$, $32\times32$,\ldots, $2048\times2048$.}
\label{fig:krylov_fgmres}
\end{center}
\end{figure}

\begin{figure}[h!]
\begin{center}
\subfloat[The convergence rate of GMRES, averaged over all iterations.]{
\includegraphics[width=10cm]{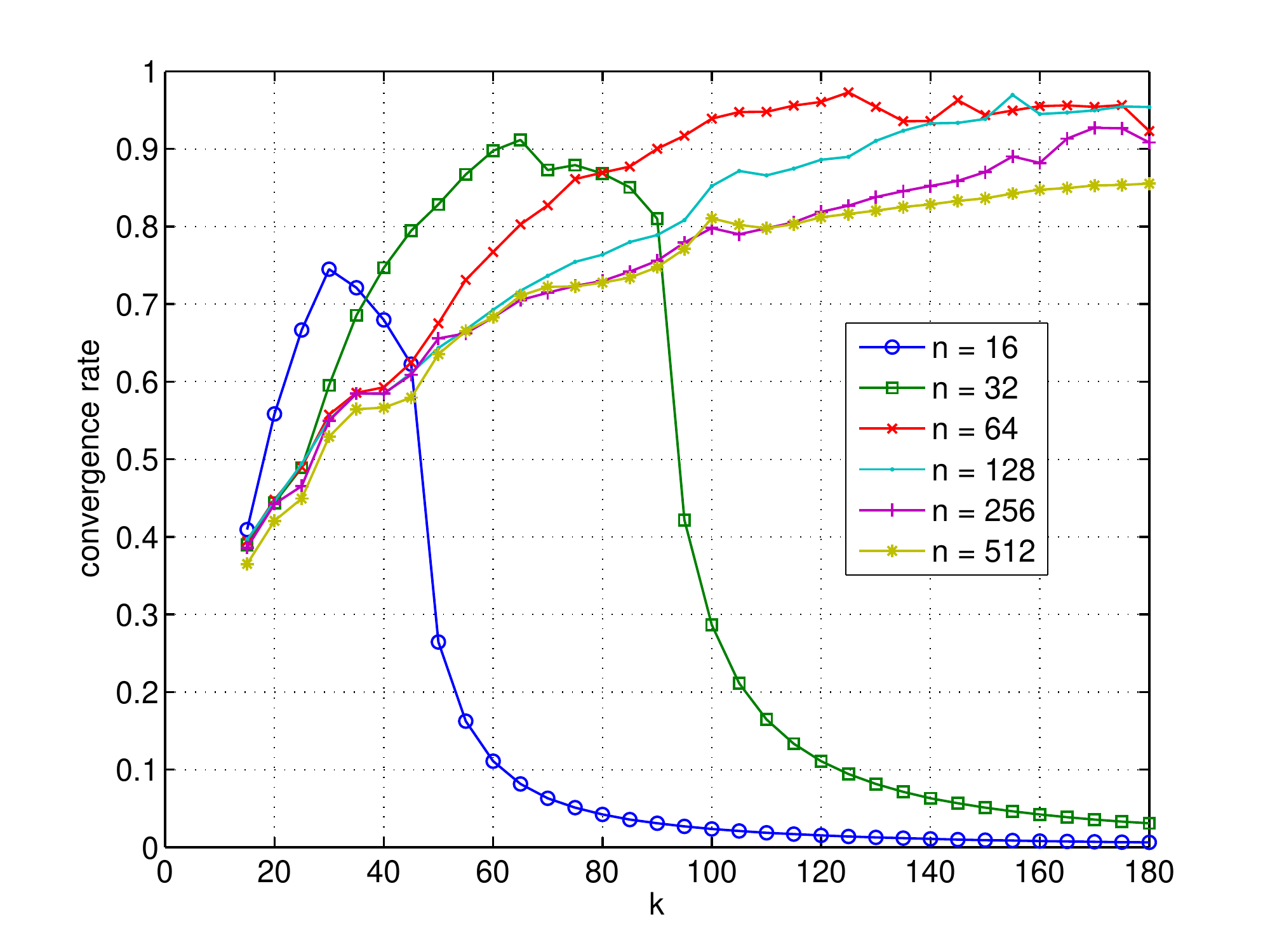}\label{fig:krylov_rate_gmres_exact}
}\\
\subfloat[The number of GMRES iterations.]{
\includegraphics[width=10cm]{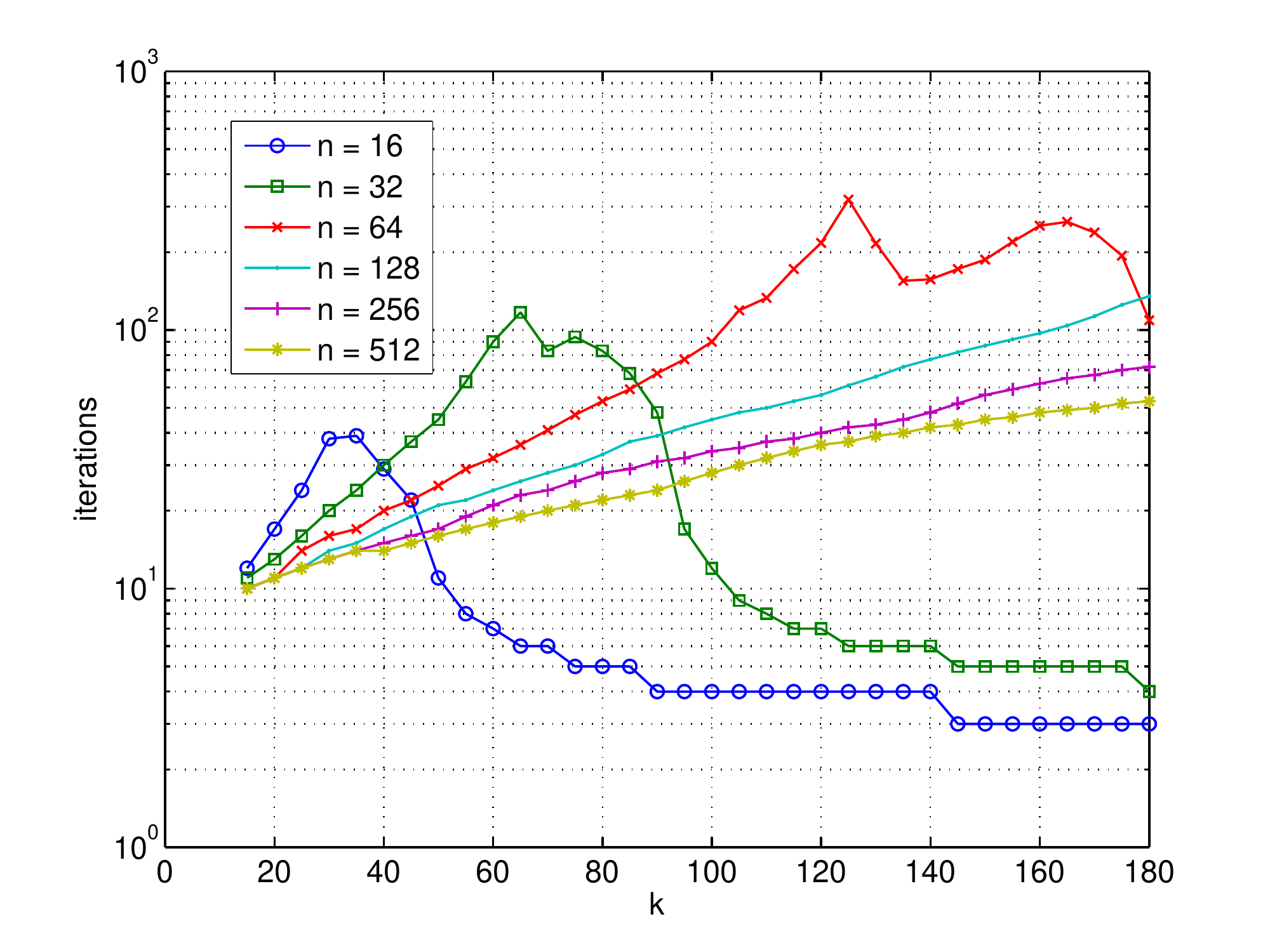}\label{fig:krylov_iter_gmres_exact}
}
\caption{Convergence results of GMRES as a function of the wave number $k$ for the homogeneous 2D Helmholtz problem. The preconditioner is exactly inverted. The lines represent different interior grid sizes $16\times16$, $32\times32$,\ldots, $512\times512$. The convergence patterns resemble those in Figure~\ref{fig:krylov_fgmres} for the FGMRES case where the inverse of the preconditioning matrix is approximated with one V(1,1)-cycle. The discussed effects are more pronounced.}
\label{fig:krylov_gmres_exact}
\end{center}
\end{figure}
Only for the two smallest grid sizes this behavior lies completely
within the tested range of wave numbers $k$. Indeed, in
Table~\ref{tab:criticalk} the critical wave numbers are
$k_1=\frac{2}{h}=2n$ and $k_2=\frac{2\sqrt{2}}{h}=2\sqrt{2}n$
are listed for the different grid sizes in the experiments. The first
critical wave number $k_1$ has the worst performance because the
spectrum of $H_h$ reaches its most extreme indefiniteness, with the
pitchfork perfectly spread over the negative and positive real part of
the lower half of the complex plane. For larger $k$ the spectrum tends
more to negative definiteness. The second peak is right before it
turns completely negative definite in the second critical wave number
$k_2$ after which the convergence rate obviously drops drastically.

\begin{table}
\centering
\begin{tabular}{c c c c c c c c c}
\hline
  $n$   & 16 & 32 & 64 & 128 & 256 & 512 & 1024 & 2048 \\
 \hline
  $k_1$ & 32 & 64 & 128 & 256 & 512 & 1048 & 2048 & 4096 \\
  $k_2$ & 45.3 & 90.5 & 181.0 & 362.0 & 724.1 & 1448.2 & 2896.3 & 5792.6\\
  \hline
   $k_b=\sqrt{|t_b|}$ & 12.8 & 13.8 & 20.3 & 26.2 & 35.7 & 40.6 & 50.0 & 53.1 \\
\hline
\end{tabular}
 \caption{The critical wave numbers $k_1$ and $k_2$ for the discrete Helmholtz problem with constant $k$ for different interior grid sizes $n$. For these values of $k$ the preconditioned Krylov method reaches its worst performance as we see in Figures~\ref{fig:krylov_fgmres} and \ref{fig:krylov_gmres_exact}. The absolute value of the branch point indicates a wave number $k_b=\sqrt{|t_b|}$ that marks the end of an early plateau in the convergence behavior discussed in Section~\ref{ssec:deviation}.}
 \label{tab:criticalk}
\end{table}

For a realistic physical solution the grid size should be large enough
in order to represent the wave accurately. Higher wave numbers $k$
require finer meshes \cite{babuska2000pollution}. This means that in practical
circumstances only the region on the curve long before the first peak
is important, where the spectrum is only slightly negative definite.
In that region we still profit from the fact that for $k\rightarrow 0$ the eigenvalues of the preconditioner
accumulate into a single point (see Section~\ref{ssec:deviation}). There
we see a rise and the stagnation of the number of iterations into a
plateau with the branch point $t_b$ as indicator. However, on Figures~\ref{fig:krylov_fgmres} and \ref{fig:krylov_gmres_exact} the range of wave numbers $k$ is too wide to clearly uncover this initial effect as in Figure~\ref{fig:gmres_iterations}.

\section{Discussion and Conclusions}
In this paper we have analyzed the convergence rate of a multigrid
preconditioned Krylov solver for Helmholtz problems with absorbing
boundary conditions. The multigrid method inverts a preconditioner that is a Helmholtz operator discretized on a complex-valued grid rather than on a real grid. This preconditioner is comparable to the complex shifted Laplacian.  The multigrid method uses GMRES($3$) as a smoother at each level.

To understand the Krylov convergence, we have proposed a model problem
with a Dirichlet boundary condition on one side and an outgoing wave
boundary condition at the other. The outgoing boundary condition is
implemented with exterior complex scaling (ECS) that extends the domain with a
complex-valued contour.  This model problem is representative for
the implementation of absorbing boundary layers in various
applications such as ECS, which is often used in chemistry and physics or PMLs, which are frequently used in engineering.

We have analyzed this model both in a continuous and a discrete way. For the continuous problem we have found that the spectrum of the preconditioned operator lies on a curve in the complex plane which is bounded away from zero. This leads to an expected Krylov convergence rate that is bounded for all wave numbers. For small wave numbers the convergence rate is faster since the eigenvalues accumulate to a single point.

In the discrete problem the spectrum behaves similarly to the continuous problem for small wave numbers. However, for larger wave numbers the spectrum can deviate significantly. This finds it origin in the properties of the discrete Helmholtz operator that has a pitchfork in the spectrum, where only one of the arms of the pitchfork approximates the spectrum of the continuous operator. These deviations destroy the nice convergence expectations given by the continuous operator. The distance to the origin of the predicted point $t_b$ where the spectrum bifurcates grows only very slowly as a function of the number of grid points. Numerical experiments show that also varying wave numbers result in similar pitchfork shaped spectra with branching points that can still be fairly well estimated using Proposition~\ref{prop:branchpoint}.

As a rule of thumb the Krylov convergence is bounded when $k^2$ is smaller than the absolute value of the branch point $t_b$. There we can
expect a bounded convergence rate. For wave numbers $k$ larger than this branch point the number of iterations rises until $k^2\approx 4/h^2$, where the number of
iterations is maximal.  We have a second but milder peak at $k^2\approx 8/h^2$. From then on the spectrum is negative definite and the convergence rate drops rapidly and only a few iterations are required to solve the system.

We conclude that there is no overall $k$-independent convergence rate, yet the number of iterations diminishes as the number of grid points is increased. To further improve the convergence rate of the iterative method it is possible to engineer the parameters of the absorbing layer as a
function of the wave number.  For large wave numbers $k$ we do not need
a large ECS grid or a large rotation angle to absorb the wave.
Adapting the parameters can reduce the number of iterations to solve
the problem.

Although the current analysis is for constant wave numbers $k$, we
believe many results will still be valid when the wave number varies
over space.  Indeed, a space-dependent $k(x)$ will only affect the
smoothest eigenvalues while the extreme values that determine the
diverging behavior depend on the grid distance and remain the same.

There still remain important challenges in the development of
an efficient solver for the Helmholtz problem with space-dependent wave
numbers based on complex stretched domains.  In numerical experiments
with strongly space-dependent wave numbers that allow evanescent waves we have
seen serious deteriorations of the convergence rate
\cite{polynomialsmoother}.  This is caused by the multigrid coarse grid
correction on levels too coarse to resolve the evanascent waves.  This
is a subject of future research.

\section*{Acknowledgement}
This research has been funded by the \textit{Fonds voor Wetenschappelijk Onderzoek (FWO)} by the project G.0174.08 and \textit{Krediet aan navorser} 1.5.145.10.\\
\br{The authors would like to thank Hisham bin Zubair for sharing a multigrid implementation.}

\bibliographystyle{wileyj}
\bibliography{bib_revision_arxiv}

\end{document}